\tikzstyle{vertex}=[circle,inner sep=1pt,minimum size=5mm, font=\tiny]
\tikzstyle{edge style}=[line width=.5pt]
\tikzstyle{label style}=[circle,fill=white,inner sep=1pt,font=\tiny]
\tikzstyle{squirestyle}=[scale=.8,rounded corners,minimum height=5cm,minimum width=7cm,draw]
\tikzstyle{circlestyle}=[scale=.8,circle,minimum height=3cm, draw]
\tikzstyle{ellipsestyle}=[scale=.8,ellipse,dash pattern=on 5pt off 5pt,minimum height=2.3cm,minimum width=4.5cm, draw]
\tikzstyle{point}=[ circle, draw=black, inner sep=.3mm, font=\scriptsize]
\newtheorem{theorem}{Theorem}[section]
\newtheorem{corollary}[theorem]{Corollary}
\newtheorem{lemma}[theorem]{Lemma}
\newtheorem{proposition}[theorem]{Proposition}
\theoremstyle{definition}
\newtheorem{construction}[theorem]{Construction}
\newtheorem{remark}[theorem]{Remark}
\newtheorem{example}[theorem]{Example}
\newtheorem{problem}[theorem]{Problem}
\numberwithin{equation}{section}
\renewcommand{\S}{\mathrm{S}}
\newcommand{\Aut}{\mathbf{Aut}}
\newcommand{\Dmc}{\mathcal{D}}
\newcommand{\Pmc}{\mathcal{P}}
\newcommand{\Bmc}{\mathcal{B}}
\newcommand{\Omc}{\mathcal{O}}
\newcommand{\Rmc}{\mathcal{R}}
\newcommand{\Cmc}{\mathcal{C}}
\newcommand{\Qmc}{\mathcal{Q}}
\newcommand{\Oi}{\mathcal{O}_{\iota}}
\newcommand{\Oo}{\mathcal{O}_{o}}
\newcommand{\Ox}{\mathcal{O}_{X}}
\newcommand{\Kbf}{\mathbf{K}}
\newcommand{\n}{\mathbf{n}}
\renewcommand{\ni}{\mathbf{n}_{\iota}}
\newcommand{\no}{\mathbf{n}_{o}}
\newcommand{\nx}{\mathbf{n}_{X}}
\def\Sym{{\rm Sym}}
\def\la{\langle}
\def\ra{\rangle}
\def\Bhat{\widehat{\Bmc}}
\def\Dhat{\widehat{\Dmc}}
\renewcommand{\leq}{\leqslant}
\renewcommand{\geq}{\geqslant}
\newcommand{\imod}[1]{\allowbreak\mkern4mu({\operator@font mod}\,\,#1)}
\newcommand*\mathinhead[2]{\texorpdfstring{${#1}$}{#2}}
\begin{document}
 \title[Block-transitive two-designs based on grids]{Block-transitive two-designs\\ based on grids}

 \author[S.H. Alavi]{Seyed Hassan Alavi}%
 
 \address{Seyed Hassan Alavi, Department of Mathematics, Faculty of Science, Bu-Ali Sina University, Hamedan, Iran.
 }\email{alavi.s.hassan@basu.ac.ir}
 \author[A. Daneshkhah]{Ashraf Daneshkhah}%
 \thanks{Corresponding author: Ashraf Daneshkhah}
 \address{Ashraf Daneshkhah, Department of Mathematics, Faculty of Science, Bu-Ali Sina University, Hamedan, Iran.
 }%
  \email{adanesh@basu.ac.ir}
 \author{Alice Devillers}
 \address{ %
     Alice Devillers, Centre for the Mathematics of Symmetry and Computation, School of Mathematics and Statistics, The University of Western Australia, 35 Stirling Highway, Crawley, 6009 W.A.,
     Australia.}
 \email{alice.devillers@uwa.edu.au}
 \author{Cheryl E. Praeger}
 \address{ %
 Cheryl E. Praeger, Centre for the Mathematics of Symmetry and Computation, School of Mathematics and Statistics, The University of Western Australia, 35 Stirling Highway, Crawley, 6009 W.A.,
 Australia.}
 \email{cheryl.praeger@uwa.edu.au}
\thanks{The research was supported by Australian Research Council Discovery Grant DP200100080 of the third and fourth authors.}
 \subjclass[2010]{05B05 (primary), 05B25, 20B25 (secondary)}%
 \keywords{$2$-design; block-transitive; cartesian decomposition; grid; product action }
 \date{June 24, 2021}%

\begin{abstract}
We study point-block incidence structures $(\Pmc,\Bmc)$ for which the point set $\Pmc$ is an $m\times n$ grid. Cameron and the fourth author showed that each block $B$ may be viewed as a subgraph of a complete bipartite graph $\Kbf_{m,n}$ with bipartite parts (biparts) of sizes $m, n$. In the case where $\Bmc$ consists of all the subgraphs isomorphic to $B$, under automorphisms of $\Kbf_{m,n}$ fixing the two biparts, they obtained necessary and sufficient conditions for $(\Pmc,\Bmc)$ to be a $2$-design, and to be a $3$-design. We first re-interpret these conditions more graph theoretically, and then focus on square grids, and designs admitting the full automorphism group of $\Kbf_{m,m}$. We find necessary and sufficient conditions, again in terms of graph theoretic parameters, for these incidence structures to be $t$-designs, for $t=2, 3$, and give infinite families of examples illustrating that block-transitive, point-primitive $2$-designs based on grids exist for all values of $m$, and flag-transitive, point-primitive examples occur for all even $m$. This approach also allows us to construct a small number of block-transitive $3$-designs based on grids.
\end{abstract}

\maketitle
\section{Introduction}\label{sec:intro}

A point-block incidence structure consists of a set $\Pmc$ of points, a set $\Bmc$ of blocks, and an incidence relation between $\Pmc$ and $\Bmc$. We will always take elements of $\Bmc$ to be subsets of $\Pmc$ with incidence as inclusion, and we assume that the point set $\Pmc$ is an $m\times n$ grid:
\begin{equation}\label{prc}
	\Pmc=\Rmc\times\Cmc,\quad\mbox{where $\Rmc, \Cmc$ are disjoint sets with $|\Rmc|=m, |\Cmc|=n$.}
\end{equation}
In their 1993 paper \cite[Section 3]{a:CamPr-btI-93}, Cameron and the fourth author studied incidence structures of this type, and viewed blocks in $\Bmc$ as subgraphs of an associated complete bipartite graph $\Kbf_{m,n}$ with vertex set $\Rmc\cup\Cmc$ and edges all pairs $\{R,C\}$ with $R\in\Rmc$ and $C\in\Cmc$. A $k$-element subset $B\subseteq \Pmc$ was viewed as the $k$-edge subgraph $\Delta$ of $\Kbf_{m,n}$ with edges the $k$ pairs $\{R,C\}$ such that the point $(R,C)$ of $\Pmc$ lies in $B$. Conversely, for a subgraph $\Delta$ of $\Kbf_{m,n}$, the associated subset of $\Pmc$ is the set $B(\Delta)$ of all pairs $(R,C)$ such that $R\in\Rmc$, $C\in\Cmc$, and $\{R,C\}$ is an edge of $\Delta$. The full automorphism group of $\Kbf_{m,n}$ is $K:=\Sym(\Rmc)\times\Sym(\Cmc)=\S_m\times\S_n$ if $m\ne n$, or $G=\S_m\wr\S_2$ (containing $K$ as an index $2$ subgroup) if $m=n$. The incidence structures analysed in \cite[Proposition 3.6]{a:CamPr-btI-93} are the following: for a given subgraph $\Delta$ of $\Kbf_{m,n}$,
\begin{equation}\label{dmc}
	\Dmc(\Delta)=(\Pmc,\Bmc)\quad\mbox{where $\Bmc =\{ B(\Delta^g) \mid g\in K\}$,}
\end{equation}
that is, the blocks are the subsets $B(\Delta')$, for all subgraphs $\Delta'$ of $\Kbf_{m,n}$ isomorphic to $\Delta$ under automorphisms from $K$. The group $K$ is, by definition, transitive on the block-set of $\Dmc(\Delta)$, and is also transitive on $\Pmc$ in its product action. Hence all blocks have the same size, namely the number $k$ of edges of $\Delta$, and each point $(R,C)$ lies in the same number of blocks.

For positive integers $t, v, k, \lambda$, an incidence structure $\Dmc=(\Pmc,\Bmc)$ is a \emph{$t$-$(v,k,\lambda)$ design}
if $v=|\Pmc|$, each block has size $k$, and each $t$-element subset of $\Pmc$ is contained in exactly $\lambda$ blocks.
Thus, taking $v=mn$, $\Dmc(\Delta)$ is a  $1$-$(v,k,\lambda)$ design for some $\lambda$, for any subgraph $\Delta$ with $k$ edges.
In this paper we build on work in \cite{a:CamPr-btI-93} to determine conditions for  $\Dmc(\Delta)$ to be a $t$-design for $t>1$ in terms of graph theoretic parameters of $\Delta$. The complement of a $t$-$(v,k,\lambda)$ design is also a $t$-design with block size $v-k$ and with the same automorphism group. Thus we assume that $k\leq v/2$. Moreover we are not interested in designs with block size $2$, since they are better interpreted as graphs, so we will assume that $3\leq k\leq v/2$.

The result \cite[Proposition 3.6]{a:CamPr-btI-93} gives necessary and sufficient conditions for $\Dmc(\Delta)$ to be a $2$-design, and to be a $3$-design, in terms of $m,n,k$ and certain other parameters associated with $\Delta$. We present this result with an additional graph theoretical interpretation of these conditions in Proposition~\ref{prop:K}, which assists us to study further the case of the square grid $m=n$. Note that $\Dmc(\Delta)$ is never a $4$-design (see  Lemma \ref{lem:not4}).

We then  focus on the case where $m=n$. Let $\Rmc = \{R_1,\dots,R_m\}$ and $\Cmc=\{C_1,\dots,C_m\}$. Here the graph $\Kbf_{m,m}$ admits the transposition map
\[
\tau:(R_i,C_j)\to (R_j, C_i),\ \mbox{for $i,j=1,\ldots,m$, and $\Aut(\Kbf_{m,m})=G=\la K,\tau\ra \cong\S_m\wr \S_2$.}
\]
In this case, we can define a second incidence structure using the larger group $G$ as follows.
\begin{equation}\label{dmchat}
	\Dhat(\Delta)=(\Pmc,\Bhat),\quad\mbox{where \ $\Bhat =\{ B(\Delta^g) \mid g\in G\}$.}
\end{equation}
The group $G$ is transitive on both $\Pmc$ and $\Bhat$, and hence $\Dhat(\Delta)$ is also a $1$-design, for any $\Delta$. Clearly the block set $\Bhat$ of $\Dhat(\Delta)$ contains $\Bmc$, and equality may or may not hold (see Lemma~\ref{lem:1design}(b)). Also $\Dhat(\Delta)$ may be a $t$-design for $t=2, 3$ (but not $t=4$ by Lemma~\ref{lem:not4}), and conceivably this can happen in three different ways, namely:
\begin{enumerate}
	\item[(1)] $ \Dhat(\Delta)=\Dmc(\Delta) $, and $\Dmc(\Delta)$  is a $t$-design;
	\item[(2)] $\Dhat(\Delta)\ne \Dmc(\Delta)$, and $\Dmc(\Delta)$ is a $t$-design (which implies that $\Dhat(\Delta)$ is also a $t$-design by \cite[Proposition 1.1]{a:CamPr-btI-93});
	\item[(3)] $\Dhat(\Delta)\ne \Dmc(\Delta)$, $\Dhat(\Delta)$ is a $t$-design, but $\Dmc(\Delta)$ is not a $t$-design.
\end{enumerate}
We consider these possibilities and obtain conditions on $\Delta$ and its parameters for obtaining $t$-designs.
Our main results for $m=n$ are Corollary~\ref{cor:K} (for $\Dmc(\Delta)$ to be a $t$-design),  Proposition~\ref{prop:G} (for Case (1)), Theorem~\ref{prop:G2} (for $\Dhat(\Delta)$ to be a $t$-design) and Corollary~\ref{cor:dhat} (for case (3)).  Given the results in \cite{a:CamPr-btI-93}, case (3) is perhaps the most interesting. In Example~\ref{ex1} we provide infinitely many $2$-designs for case (1) and also infinitely many $2$-designs for case (3) (see Lemma~\ref{lem:ex1}).  In the final Section~\ref{sec:3-design}, we construct some block-transitive $3$-designs and pose several open questions about existence of further examples.

Also, in Example~\ref{ex2}, we construct infinitely many flag-transitive $2$-designs for case (1) (see Lemma~\ref{lem:ex2}). Recall that a flag of an incidence structure is an incident point-block pair. We note that, in their study of flag-transitive $2$-designs with block-size four, Zhan, Zhou and Chen~\cite[Theorem 2]{a:ZZC-2018} showed that, if $\Delta$ has exactly four edges, and if $\Dhat(\Delta)$ is a flag-transitive $2$-design, then $m=5$, there are exactly two examples, and in each case $\Dhat(\Delta)=\Dmc(\Delta)$ (one has $\lambda=12$ and the other has $\lambda=18$). Neither of these designs lies in the family of flag-transitive $2$-designs constructed in Example~\ref{ex2}.

The designs $\Dmc(\Delta)$ and $\Dhat(\Delta)$ have also been studied recently in \cite{a:Braic-18}, seeking flag-transitive $1$-designs.  We comment on this study in Remark~\ref{rem:bmv}. There are other studies of flag-transitive $2$-designs relative to a group preserving a grid structure on the point set. For example, Cameron and the fourth author in \cite[Construction 7.2]{a:Praeger-Cameron-2014} show that, for each $r\geq2$, the symplectic design $S^-(r)$ admits a flag-transitive but point-imprimitive subgroup $2^{2r}\rtimes {\rm GL}(r,2)$ preserving a square $2^r\times 2^r$ grid structure on points. The full automorphism group $2^{2r}\rtimes {\rm Sp}(2r,2)$ of  $S^-(r)$ does not preserve this grid structure, but the fact that its flag-transitive subgroup  $2^{2r}\rtimes {\rm GL}(r,2)$ does shows that this symmetric design is a `subdesign' of one of the $2$-designs $\Dhat(\Delta)$ studied in this paper.  In \cite[Question 7.3]{a:Praeger-Cameron-2014} it was asked whether, for other symmetric designs,  there might exist flag-transitive subgroups of automorphisms which preserve a grid-structure on points. One such example arose in a recent study of point-imprimitive flag-transitive designs, where the third and fourth authors proved that there is a unique flag-transitive $2-(36,8,4)$-design \cite[Proposition 13]{a:DevillersPraeger2021}. Its full automorphism group, namely $S_6$, preserves a $6\times 6$ grid structure on the point set \cite[Construction 9 and Remark 12]{a:DevillersPraeger2021}. This design was also discovered independently by Zhang and Zhou~\cite[Theorem 1.3]{a:ZhangZhou2019} in their investigation of flag-transitive, point-quasiprimitive $2$-designs with $\lambda\leq 4$.

\begin{remark}\label{rem:bmv}
	In their 2018 paper~\cite{a:Braic-18}, Brai\'{c}, Mandi\'{c} and Vu\v{c}i\v{c}i\'{c}  study the subfamily of $1$-designs $\Dhat(\Delta)$ on which $G$ acts flag-transitively (that is, $G$ is transitive on the incident point-block pairs of $\Dhat(\Delta)$).
	They prove  in \cite[Proposition 4.1]{a:Braic-18} a version of \cite[Proposition 3.6]{a:CamPr-btI-93} for $G$-flag-transitive $1$-designs $\Dhat(\Delta)$: they characterise the property of $G$-flag-transitivity on $\Dhat(\Delta)$ in terms of a smaller incidence structure based on $B=B(\Delta)$, which they call $\Gamma(B)$. In particular they show that, for a $G$-flag-transitive $1$-design $\Dhat(\Delta)$, the group of automorphisms of $\Gamma(B)$ induced by the stabiliser $G_B$ is either flag-transitive, or `weakly flag-transitive' (that is, there are two equal length orbits on flags which are interchanged by a `duality' of $\Gamma(B)$).
	In \cite[Theorem 5.1] {a:Braic-18} they identify a subdivision of the family of $G$-flag-transitive $1$-designs $\Dhat(\Delta)$ into three types (different from, but reminiscent of our subdivision above), and they then proceed computationally, using MAGMA \cite{a:magma-97}, to find all flag-transitive  examples with $m<40$, and to determine whether or not such $1$-designs exist for $40\leq m\leq 63$. Unfortunately the interesting summary of the examples in \cite[Sections 7, 8]{a:Braic-18} does not provide any information on which of the examples are $t$-designs for $t\in\{ 2, 3\}$.
	
	Our work is independent of \cite{a:Braic-18}, principally because we are firstly interested  in finding when the  block-transitive incidence structures  $\Dhat(\Delta)$ and $\Dmc(\Delta)$ are $2$-designs or $3$-designs. Secondly, as mentioned above, we provide characterisations of these properties  in terms of graph theoretic conditions on $\Delta$. See Propositions~\ref{prop:K} and~\ref{prop:G}, Corollary~\ref{cor:K} and Theorem~\ref{prop:G2}.
	%
\end{remark}

\section{Parameters for the graph \mathinhead{\Delta}{Delta} and the block \mathinhead{B(\Delta)}{B(Delta)}}\label{sec:param}

We begin with a formal construction of the designs $\Dmc(\Delta)$ and $\Dhat(\Delta)$ starting from a subgraph $\Delta$, or equivalently, the corresponding point-subset $B(\Delta)$. In so doing we introduce the important associated parameters $x_i, y_j$.

\begin{construction}\label{con1}
	Let $m, n, k$ be positive integers with $3\leq k\leq mn/2$, let
	\[
	\mbox{$\Rmc = \{R_1,\dots,R_m\}$ and $\Cmc=\{C_1,\dots,C_n\}$ be disjoint sets, of size $m, n$ respectively},
	\]
	and let $\Kbf_{m,n}$ be the complete bipartite graph  with vertex set $\Rmc\cup \Cmc$, and edges all the pairs $\{R, C\}$ with $R\in\Rmc$ and $C\in\Cmc$. Also let $\Pmc=\Rmc\times\Cmc$.
	\begin{enumerate}[\rm (a)]
		\item Let $\Delta$ be a subgraph of $\Kbf_{m,n}$ having $k$ edges, and let
		\[
		B(\Delta)=\{(R,C)\in\Pmc \mid \mbox{$\{R,C\}$ is an edge of $\Delta$}\}.
		\]
		For $1\leq i\leq m$ and $1\leq j\leq n$, we define the parameters $x_i, y_j$ both in terms of $\Delta$ and of $B(\Delta)$, as follows:		
		\begin{align*}
			x_i&=\#\{j\mid \{R_i,C_j\}\ \mbox{is an edge of $\Delta$}\}\\
			&=\#\{j\mid  (R_i,C_j)\in B(\Delta)\},\\
			\mbox{and}\quad y_j&=\#\{i\mid \{R_i,C_j\}\ \mbox{is an edge of $\Delta$}\}\\
			&=\#\{i\mid  (R_i,C_j) \in B(\Delta)\}.
		\end{align*}
		
		\item Let $K=\Sym(\Rmc)\times\Sym(\Cmc)=\S_{m}\times \S_{n} $ acting as a group of automorphisms of $\Kbf_{m,n}$, and if $m=n$ also let $G=\langle K,\tau\rangle \cong\S_m\wr \S_2$, where
		\[
		\tau: (R_i,C_j)\to (R_j,C_i)\quad \mbox{for}\quad i,j=1,\dots, m.
		\]		
		\item Consider the product action of $K$, and also of $G$ if $m=n$, on $\Pmc$ and note that, for $g$ in $K$ or $G$, $\Delta^g$ is a $k$-edge subgraph of $\Kbf_{m,n}$ isomorphic to $\Delta$, and $B(\Delta^g)=(B(\Delta))^g$. Let
		\[
		\Bmc_K=\{ B(\Delta^g)\mid g\in K\}\quad\mbox{and}\quad \Bmc_G=\{ B(\Delta^g)\mid g\in G\},
		\]
		so that $\Dmc(\Delta)=(\Pmc,\Bmc_K)$, and if $m=n$, then $\Dhat(\Delta)= (\Pmc,\Bmc_G)$.
	\end{enumerate}
\end{construction}

We first record that this construction always produces block-transitive  $1$-designs.

\begin{lemma}\label{lem:1design}
	Let $\Kbf_{m,n}, \Delta, K, G, \tau$ be as in Construction~$\ref{con1}$.
	\begin{enumerate}[\rm (a)]
		\item $\Aut(\Kbf_{m,n})=K$ if $m\ne n$ and $\Aut(\Kbf_{m,n})=G$ if $m=n$;
		
		\item if $m=n$, then $G$ leaves $\Dmc(\Delta)$ invariant if and only if $\Delta^\tau=\Delta^x$ for some $x\in K$; and in this case $\Dhat(\Delta)=\Dmc(\Delta)$;
		
		\item $\Dmc(\Delta)$ is a $1$-$(mn,k,\lambda_1)$ design, for some $\lambda_1$, and $K$ is transitive on points and on blocks of $\Dmc(\Delta)$;
		
		\item if $m=n$ then $\Dhat(\Delta)$ is a $1$-$(m^2,k,\lambda_1')$ design, for some $\lambda_1'$, and $G$ is transitive on points and on blocks of $\Dhat(\Delta)$.
	\end{enumerate}
\end{lemma}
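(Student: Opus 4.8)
The plan is to dispatch the four parts in order, noting that parts (c) and (d) are essentially formal consequences of the way the actions are set up in Construction~\ref{con1}, so that the only genuine content lies in parts (a) and (b).

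For part (a) I would argue by vertex degrees. In $\Kbf_{m,n}$ each vertex of $\Rmc$ has degree $n$ and each vertex of $\Cmc$ has degree $m$, and graph automorphisms preserve degrees. If $m\ne n$ then no automorphism can send a vertex of $\Rmc$ to one of $\Cmc$, so both biparts are preserved setwise; conversely, since every vertex of $\Rmc$ is joined to every vertex of $\Cmc$, each element of $\Sym(\Rmc)\times\Sym(\Cmc)=K$ is an automorphism, giving $\Aut(\Kbf_{m,n})=K$. If $m=n$ the same degree argument instead shows each automorphism either fixes the two biparts (hence lies in $K$) or interchanges them; as $\tau$ realises the interchange, we get $\Aut(\Kbf_{m,m})=\la K,\tau\ra=G$, of order $2(m!)^2=|\S_m\wr\S_2|$.

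For part (b) the key point is that $K$ is normal of index $2$ in $G$ and that $\Bmc_K$ is, by definition, the single $K$-orbit of $B(\Delta)$. Since $K$ already stabilises $\Bmc_K$, the group $G=\la K,\tau\ra$ stabilises $\Bmc_K$ if and only if $\tau$ does. Using $B(\Delta^g)=B(\Delta)^g$ together with $\tau^{-1}K\tau=K$, the image $(\Bmc_K)^\tau$ is exactly the $K$-orbit of $B(\Delta)^\tau=B(\Delta^\tau)$; since distinct $K$-orbits are disjoint, $(\Bmc_K)^\tau=\Bmc_K$ holds if and only if $B(\Delta^\tau)=B(\Delta^x)$ for some $x\in K$, equivalently (as $\Delta\mapsto B(\Delta)$ is a bijection between $k$-edge subgraphs and the corresponding point-sets) if and only if $\Delta^\tau=\Delta^x$. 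For the last clause, $\Bmc_G$ is the $G$-orbit of $B(\Delta)$ and always contains $\Bmc_K$; if $G$ stabilises $\Bmc_K$ then this whole $G$-orbit lies in $\Bmc_K$, forcing $\Bmc_G=\Bmc_K$, i.e.\ $\Dhat(\Delta)=\Dmc(\Delta)$.

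For parts (c) and (d), block-transitivity is immediate because $\Bmc_K$ and $\Bmc_G$ are single orbits of $K$ and $G$ respectively; point-transitivity follows from the product action of $K=\S_m\times\S_n$ on $\Rmc\times\Cmc$ being transitive, and $K\leq G$ carries this to $G$. Each block $B(\Delta^g)$ has exactly $k$ points since $g$ preserves the edge-count of $\Delta$, and a point-transitive group bijects the blocks through one point onto those through any other, so the number of blocks on a point is a constant $\lambda_1$ (resp.\ $\lambda_1'$); a flag count gives $\lambda_1=k\,|\Bmc_K|/(mn)$, yielding the $1$-design conclusions with $v=mn$ and $v=m^2$ respectively. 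I expect no serious obstacle: the lemma is almost entirely formal, and the one step needing care is part (b), where one must use the normality $\tau^{-1}K\tau=K$ and the identity $B(\Delta)^g=B(\Delta^g)$ to see that the $\tau$-image of a $K$-orbit is again a single $K$-orbit.
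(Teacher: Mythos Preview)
Your proof is correct and follows essentially the same approach as the paper, only with considerably more detail: the paper dismisses part (a) as ``well-known'' and parts (c), (d) as immediate from point- and block-transitivity, while for part (b) it gives the same reduction to ``$\tau$ stabilises $\Bmc_K$ iff $\Delta^\tau\in\Bmc_K$'' that you spell out. Your explicit use of the normality $\tau^{-1}K\tau=K$ to see that $\tau$ permutes $K$-orbits is exactly the step the paper leaves implicit.
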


\begin{proof}
	Part (a) is well-known, and parts (c) and (d) follow from the fact that $K$ (resp. $G$) is point-transitive and block-transitive by construction. 
	For part (b), $G$ leaves   $\Dmc(\Delta)$ invariant if and only if $\tau$ does, and this, in turn, holds if and only if $\Delta^\tau\in\Bmc_K$, that is, $\Delta^\tau=\Delta^x$ for some $x\in K$.
\end{proof}

Next we prove that the designs $\Dhat(\Delta)$ and  $\Dmc(\Delta)$ cannot be $4$-designs. A permutation group on a set $X$ is said to be $t$-homogeneous if it is transitive on the  \emph{$t$-sets} ($t$-element subsets) of $X$.

\begin{lemma}\label{lem:not4}
	The designs $\Dmc(\Delta)$ and $\Dhat(\Delta)$ are not $4$-designs.
\end{lemma}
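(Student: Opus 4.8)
The plan is to show that no $4$-subset of $\Pmc$ can be covered uniformly, treating both designs at once. Write $H$ for the relevant full group, so $H=K$ for $\Dmc(\Delta)$ and $H=G$ for $\Dhat(\Delta)$ (the latter only when $m=n$); fix the base block $B=B(\Delta)$, and for a $4$-subset $S\subseteq\Pmc$ let $\lambda(S)$ denote the number of blocks containing $S$. Since $H$ preserves the incidence structure and is transitive on blocks, a standard orbit count gives
\[
\lambda(S)=\frac{|H_S|}{|H_B|}\cdot a(S),\qquad a(S):=\#\{S'\subseteq B\mid S'\ \text{lies in the same $H$-orbit as }S\},
\]
where $H_S,H_B$ are setwise stabilisers. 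In graph language $a(S)$ is the number of $4$-edge subgraphs of $\Delta$ of the same \emph{shape} as the $4$-edge subgraph determined by $S$, where shapes are identified up to $H$-equivalence (isomorphism of bipartite graphs respecting the two biparts for $H=K$, and up to swapping the biparts for $H=G$). Because $H\le\Aut$ of the design, $\lambda$ is constant on each $H$-orbit of $4$-subsets; hence $\Dmc(\Delta)$ (resp. $\Dhat(\Delta)$) is a $4$-design precisely when $|H_S|\,a(S)$ takes a common value $M$ over all $4$-subsets $S$.

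First I would dispose of the trivial range: a genuine $4$-design has $\lambda\ge1$ and hence $k\ge4$, since a block with $k=3$ edges contains no $4$-subset at all, so assume $k\ge4$. Then every \emph{realisable} shape $F$ (every $4$-edge bipartite graph embedding in $\Kbf_{m,n}$ for the given $m,n$) must have its copy count $c_F(\Delta)=a(S)$ positive: if some realisable $F$ had no copy in $\Delta$, the corresponding $4$-subsets would have $\lambda=0$, whereas the $4$-subsets matching the shape of an actual $4$-edge subgraph of $B$ have $\lambda\ge1$, and these unequal values already violate the design property. This settles at once every case in which $\Delta$ omits a realisable shape, in particular every $k=4$ case, where $\Delta$ realises a single shape while the grid (having $mn\ge8$, so $m,n\ge2$ with $\max(m,n)\ge3$) always realises at least two shapes, for example a rectangle $C_4$ alongside a path $P_5$ or a star $K_{1,4}$.

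It remains to handle \emph{rich} graphs $\Delta$ containing a copy of every realisable shape, and this is the main obstacle. Here I would exploit the rigidity of the stabiliser orders: for a shape $F$ spanning $r$ row-vertices and $s$ column-vertices one has $|H_S|=(m-r)!\,(n-s)!\,A(F)$, with $A(F)$ the order of the bipart-respecting automorphism group of $F$, so the equalities $|H_S|\,a(S)=M$ pin the counts $c_F(\Delta)$ to rigid mutual ratios governed by falling factorials in $m$ and $n$. Comparing the rectangle ($r=s=2$), the two stars $K_{1,4}$ (with $(r,s)=(1,4)$ and $(4,1)$, whose counts are $\sum_i\binom{x_i}{4}$ and $\sum_j\binom{y_j}{4}$), and the $4$-matching ($r=s=4$) yields a system linking the $4$-cycle count, the $4$-matching count and the quantities $\sum_i\binom{x_i}{4}$, $\sum_j\binom{y_j}{4}$ through these factorial ratios; the plan is to show this system is incompatible with the basic constraints $\sum_i x_i=\sum_j y_j=k$ and the nonnegativity and integrality of the counts. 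For $\Dmc(\Delta)$ the asymmetry between row-shapes and column-shapes, which $K$ keeps distinct, is the most convenient source of the contradiction, whereas for $\Dhat(\Delta)$ the transposition $\tau$ merges each shape with its bipart-swap, so there I would compare a self-dual pair such as the rectangle against the $4$-matching (or against a star), where the $G$-orbit structure still forces an untenable factorial identity. The delicate point, requiring the most care, is making this final inconsistency uniform across all admissible $m,n$ and for both groups $K$ and $G$.
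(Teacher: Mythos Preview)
Your orbit-counting framework is correct, and the reduction ``if some realisable $4$-edge shape is absent from $\Delta$ then $\lambda$ is not constant'' is valid. The gap is that the rich case, which carries the entire weight of the lemma, is only a plan and not a proof. You propose to compare the stabiliser-weighted counts for the rectangle, the two copies of $K_{1,4}$, and the $4$-matching, and then ``show this system is incompatible'' with $\sum_i x_i=\sum_j y_j=k$; but you never exhibit the incompatibility, and you yourself flag the step as ``delicate'' and ``requiring the most care''. There is no evident reason the contradiction should fall out: the $4$-cycle and $4$-matching counts are not functions of the degree sequences $(x_i),(y_j)$ alone, so the system mixes quantities of quite different character, and the full list of $4$-edge bipartite shapes (under $K$-equivalence) is considerably longer than the four you name, each contributing its own constraint. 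Worse, the particular shapes you rely on are not always realisable (a $4$-matching needs $m,n\ge4$; a $K_{1,4}$ needs one side of size at least $4$), so the argument would splinter into subcases you have not treated. As written, the crucial step is missing.

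The paper's proof avoids shape counting entirely and is essentially two lines. Since $K$ (or $G$, when $m=n$) is block-transitive but not $2$-homogeneous on $\Pmc$, \cite[Proposition~2.1(i)]{a:CamPr-btI-93} forces any $4$-design on which it acts block-transitively to be complete; but then the group would be $k$-homogeneous on $mn$ points with $3\le k\le mn/2$, hence $2$-transitive by \cite[Theorem~9.4B]{b:Dixon}, a contradiction. The point is that the obstruction is already visible at the level of $2$-subsets of $\Pmc$, so no analysis of $4$-edge subgraph statistics of $\Delta$ is needed.
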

\begin{proof} Recall that we assume  $3\leq k\leq mn/2$.
	Suppose that $\Dmc=\Dmc(\Delta)$ or $\Dhat(\Delta)$ is a $4$-design.
	By definition, the group $K=S_m\times S_n$, or (if $m=n$) $G=S_m\wr S_2$, respectively, is a block-transitive group of automorphisms of  $\Dmc$, and neither of these groups is $2$-homogeneous on points.
	Thus, by \cite[Proposition 2.1(i)]{a:CamPr-btI-93}, it follows that $\Dmc$ is complete, that is all $k$-sets of points are blocks  (note that such a design is called trivial in \cite{a:CamPr-btI-93}). Hence the block-transitive group ($K$ or $G$) is $k$-homogeneous. Since $3\leq k\leq mn/2$, it follows from \cite[Theorem 9.4B]{b:Dixon} that the group is  $2$-transitive on points, which is a contradiction.
\end{proof}

Some properties of $\Dmc(\Delta)$ and $\Dhat(\Delta)$ depend on graph theoretic properties of $\Delta$.
A \emph{path} of length $u$, or \emph{$u$-path}, in a graph, sometimes denoted $P_u$,  is the subgraph induced by a sequence $(e_1,e_2,\dots,e_u)$ of $u$ pairwise distinct edges such that, for each $i<u$, $e_i$ and $e_{i+1}$ are incident with a common vertex, and $P_u$ has exactly $u+1$ vertices. Note that $P_u$ is also the subgraph induced by the sequence $(e_u,e_{u-1},\dots,e_1)$. 
In the bipartite graph $\Kbf_{m,n}$ all paths of odd length are equivalent under the group $K$, while there are two $K$-orbits on $u$-paths for $u$ even.
We say that a $2$-path $P$ is of \emph{type $\Rmc$} if $P$ involves one vertex of $\Rmc$, which we call its \emph{middle vertex}, and two vertices of $\Cmc$; and $P$ is of \emph{type $\Cmc$} otherwise.
A \emph{$3$-claw} in $\Delta$ is a subgraph isomorphic to $\Kbf_{1,3}$; it is said to have \emph{type $\Rmc$} if the vertex of valency $3$ lies in $\Rmc$, and to have \emph{type $\Cmc$} if
this vertex lies in $\Cmc$.
We record some graph theoretic quantities for $\Delta$ which can be expressed in terms of the parameters $x_i$ and $y_j$.

\begin{lemma}\label{lem:2-arcs}
	Let $\Delta, x_i,y_j$ be as in Construction~$\ref{con1}$.
	\begin{enumerate}[\rm (a)]
		\item The number of edges of $\Delta$ is $k=\sum_{i=1}^mx_i = \sum_{j=1}^n y_j$.
		
		\item The number of $2$-paths in $\Delta$ of type $\Rmc$ is $\sum_{i=1}^m \binom{x_i}{2}$;
		and the number of $2$-paths of type $\Cmc$ is $\sum_{j=1}^n\binom{y_j}{2}$.
		
		\item The number of $3$-claws in $\Delta$ of type $\Rmc$ is $\sum_{i=1}^m \binom{x_i}{3}$;
		and the number of $3$-claws of type $\Cmc$ is $\sum_{j=1}^n \binom{y_j}{3}$.
	\end{enumerate}
\end{lemma}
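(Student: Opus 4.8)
The plan is to observe first that the parameters $x_i$ and $y_j$ are simply the valencies in $\Delta$ of the vertices $R_i$ and $C_j$ respectively: by the defining formulas in Construction~\ref{con1}(a), $x_i$ counts the edges of $\Delta$ incident with $R_i$, and $y_j$ counts those incident with $C_j$. With this identification each part becomes a short counting argument, grouping the objects to be counted according to a distinguished vertex.

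For part (a), I would use that $\Delta$ is a subgraph of the bipartite graph $\Kbf_{m,n}$, so every edge $\{R,C\}$ has exactly one endpoint in $\Rmc$ and exactly one in $\Cmc$. Summing the valencies $x_i$ over all $R_i\in\Rmc$ therefore counts each edge exactly once via its unique $\Rmc$-endpoint, giving $\sum_{i=1}^m x_i = k$; summing the $y_j$ over $\Cmc$ counts each edge once via its unique $\Cmc$-endpoint, giving $\sum_{j=1}^n y_j = k$.

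For parts (b) and (c) I would set up the natural correspondences. A $2$-path of type $\Rmc$ has, by definition, its middle vertex in $\Rmc$, say $R_i$, together with two distinct neighbours $C_j, C_{j'}\in\Cmc$; conversely any choice of $R_i$ and an unordered pair $\{C_j,C_{j'}\}$ of distinct neighbours of $R_i$ in $\Delta$ determines exactly one such path. For fixed $i$ there are $\binom{x_i}{2}$ choices of the unordered pair, so the total is $\sum_{i=1}^m\binom{x_i}{2}$, and dually $\sum_{j=1}^n\binom{y_j}{2}$ for type $\Cmc$. Identically, a $3$-claw of type $\Rmc$ is determined by its valency-$3$ vertex $R_i$ together with an unordered triple of distinct neighbours in $\Cmc$, giving $\binom{x_i}{3}$ for fixed $i$ and $\sum_{i=1}^m\binom{x_i}{3}$ in total, with the dual count $\sum_{j=1}^n\binom{y_j}{3}$ for type $\Cmc$.

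The only point requiring care—and it is minor—is verifying that these correspondences are genuine bijections: one must note that a $2$-path equals its reverse (so the two endpoints are unordered, matching the binomial coefficient rather than an ordered count) and that a $3$-claw is recovered exactly from its centre together with its unordered set of three leaves, so that distinct subgraphs correspond to distinct data and vice versa. Since the middle vertex of a type-$\Rmc$ $2$-path and the valency-$3$ vertex of a type-$\Rmc$ claw both lie in $\Rmc$ by definition, no object is counted under both types, and the sums over $\Rmc$ and over $\Cmc$ enumerate disjoint families. No further difficulty arises, so I expect the argument to be essentially complete once these correspondences are stated.
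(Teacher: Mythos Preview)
Your proposal is correct and follows essentially the same approach as the paper: identify $x_i$ and $y_j$ as the valencies of $R_i$ and $C_j$ in $\Delta$, then count edges, $2$-paths, and $3$-claws by grouping them according to their distinguished vertex and choosing the remaining neighbours as an unordered pair or triple. Your additional remarks about the correspondences being bijections are sound but go slightly beyond what the paper spells out.
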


\begin{proof}
	Part (a) follows on noting that $x_i, y_j$ is the number of edges of $\Delta$ incident with $R_i$, $C_j$,
	respectively.
	
	For part (b), the number of $2$-paths with middle vertex $R_i$ is the number of ways to choose its pair of neighbours, that is $\binom{x_i}{2}$. Note that this number is zero if $x_i\leq 1$, as in this case no such pairs exist. It follows that the number of $2$-paths of type $\Rmc$ is  $\sum_{i=1}^m \binom{x_i}{2}$ as claimed. The number of $2$-paths of type $\Cmc$ is determined by a similar argument.
	
	An analogous argument proves part (c) on noting that each $3$-claw with $R_i$ the vertex of valency $3$ corresponds to a $3$-element subset of the $x_i$-element subset of vertices adjacent to $R_i$ (and a similar remark for $3$-claws of type $\Cmc$).
\end{proof}

In order to determine conditions for these designs to be $t$-designs for $t=2$ or $3$, we will use the following result referred to in \cite{a:CamPr-btI-93} as a `folklore result'.  For a positive integer $t$, we denote by $X^{\{t\}}$ the set of all $t$-sets of a set $X$.

\begin{proposition}[{\cite[Proposition 1.3]{a:CamPr-btI-93}}]\label{prop:CP}
	Let $G$ be a permutation group on a $v$-element set $\Pmc$, having orbits $\Omc_{1}$, \ldots $\Omc_{s}$ on $\Pmc^{\{t\}}$. Let $B\in \Pmc^{\{k\}}$, set $\Bmc:=\{B^g\mid g\in G\}$. Then for each $i=1,\ldots,s$, there is an integer $\n_i$ such that $\n_i=|(B')^{\{t\}}\cap\Omc_i|$ for each $B'\in \Bmc$. Moreover, the incidence structure $(\Pmc, \Bmc)$ is a $t$-design if and only if there exists a constant $c$ such that
	\[
	\frac{\n_1}{|\Omc_1|} = \frac{\n_2}{|\Omc_2|} = \dots = \frac{\n_s}{|\Omc_s|} = c.
	\]
	Moreover,  if $(\Pmc, \Bmc)$ is a $t$-$(v,k,\lambda)$ design, then
	\[
	c=\frac{\binom{k}{t}}{\binom{v}{t}}=\frac{\lambda}{|\Bmc|}.
	\]
	The group $G$ acts block-transitively on $(\Pmc,\Bmc)$, and $G$ is flag-transitive if and only if the setwise stabiliser $G_B$ of $B$ acts transitively on $B$.
\end{proposition}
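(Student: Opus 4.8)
The plan is to exploit two structural facts throughout: each orbit $\Omc_i$ is setwise $G$-invariant, and the block set $\Bmc$ is a single $G$-orbit. First I would establish that $\n_i$ is well defined. Given any $B'\in\Bmc$, write $B'=B^g$ for some $g\in G$. The map $S\mapsto S^g$ is a bijection from the $t$-subsets of $B$ to those of $B'$, and since $\Omc_i$ is $G$-invariant this bijection preserves membership in $\Omc_i$; hence $|(B')^{\{t\}}\cap\Omc_i|=|B^{\{t\}}\cap\Omc_i|$, independent of the choice of $B'$, and I take this common value as the definition of $\n_i$. I would also record at this point that $\sum_{i=1}^s\n_i=\binom{k}{t}$, since the $\Omc_i$ partition $\Pmc^{\{t\}}$ and $B$ has exactly $\binom{k}{t}$ subsets of size $t$.

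Next, for the design characterisation, the key step is a double count carried out orbit by orbit. Fix $i$ and count the incident pairs $(T,B')$ with $T\in\Omc_i$, $B'\in\Bmc$ and $T\subseteq B'$. I would first observe that the number $\lambda_i$ of blocks containing a fixed $t$-set $T\in\Omc_i$ is independent of the choice of $T$ within $\Omc_i$: if $T'=T^g\in\Omc_i$, then $B'\mapsto (B')^g$ is a bijection between the blocks through $T$ and those through $T'$, precisely because $\Bmc$ is $G$-invariant. Counting the incident pairs in the two possible orders then yields $|\Omc_i|\,\lambda_i=|\Bmc|\,\n_i$, so that $\lambda_i=|\Bmc|\cdot\n_i/|\Omc_i|$. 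Since $(\Pmc,\Bmc)$ is a $t$-design exactly when $\lambda_1=\dots=\lambda_s$, this is equivalent to the ratio condition $\n_1/|\Omc_1|=\dots=\n_s/|\Omc_s|$, with common value $c=\lambda/|\Bmc|$.

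For the explicit formula for $c$, I would sum the identity $|\Omc_i|\,\lambda_i=|\Bmc|\,\n_i$ over all $i$ in the design case, where every $\lambda_i$ equals the single value $\lambda$. The left-hand side becomes $\lambda\sum_i|\Omc_i|=\lambda\binom{v}{t}$ because the $\Omc_i$ partition $\Pmc^{\{t\}}$, while the right-hand side becomes $|\Bmc|\sum_i\n_i=|\Bmc|\binom{k}{t}$ by the earlier remark; this gives $c=\lambda/|\Bmc|=\binom{k}{t}/\binom{v}{t}$. Block-transitivity is immediate, since $\Bmc$ is by construction a single $G$-orbit. For the flag-transitivity criterion I would use the standard orbit–fibre argument: the projection sending each flag $(p,B')$ to its block $B'$ is a $G$-equivariant map onto the transitive $G$-set $\Bmc$, so $G$ is transitive on flags if and only if the stabiliser $G_B$ is transitive on the fibre over $B$, which is the set of points of $B$.

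The arguments are entirely elementary, so I do not anticipate a genuine obstacle. The one point requiring care is keeping the two invariance arguments distinct and combining them correctly: the $G$-invariance of $\Omc_i$ is what forces $\n_i$ to be constant across blocks, whereas the $G$-invariance of $\Bmc$ is what forces the replication number $\lambda_i$ to be constant across each orbit, and it is their interplay in the single double-count identity $|\Omc_i|\,\lambda_i=|\Bmc|\,\n_i$ that produces the ratio condition characterising the $t$-design property.
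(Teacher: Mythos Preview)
Your proposal is correct and matches the paper's approach. The paper does not give a full proof of this folklore result (it is cited from Cameron--Praeger), but the brief justification it does provide for the two values of $c$---summing $\n_i=c|\Omc_i|$ using $\sum_i\n_i=\binom{k}{t}$ and $\sum_i|\Omc_i|=\binom{v}{t}$, and double-counting incident $t$-set/block pairs---is exactly the argument you give.
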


We note that the first value of the constant $c$ follows from the equations $\n_i=c|\Omc_i|$ and the facts that $\sum_i\n_i=\binom{k}{t}$ and $\sum_i |\Omc_i|=\binom{v}{t}$. The second value follows from double-counting the number of $t$-sets and incident block pairs.

Proposition~\ref{prop:CP} has an immediate corollary for the flag-transitivity of the designs  $\Dmc(\Delta)$ and $\Dhat(\Delta)$, noting that the setwise stabilisers $K_{B(\Delta)}$ (respectively $G_{B(\Delta)}$) of $B(\Delta)$ in $K$
(resp. $G$) induce  subgroups of automorphisms of the graph $\Delta$.

\begin{corollary}\label{lem:flagtr}
	For $\Kbf_{m,n}, \Delta, K, G$ as in Construction~$\ref{con1}$, $K$ (resp. $G$) is transitive on the flags of $\Dmc(\Delta)$ (resp. $\Dhat(\Delta)$)  if and only if $K_{B(\Delta)}$ (resp. $G_{B(\Delta)}$) is transitive on the edges of $\Delta$.
\end{corollary}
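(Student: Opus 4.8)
The plan is to apply Proposition~\ref{prop:CP} directly, with $B=B(\Delta)$. Taking the group to be $K$ acting on $\Pmc$ in its product action, Construction~\ref{con1}(c) gives $\Bmc_K=\{B^g\mid g\in K\}$, so that $\Dmc(\Delta)=(\Pmc,\Bmc_K)$ is exactly the incidence structure $(\Pmc,\Bmc)$ of Proposition~\ref{prop:CP}. The final sentence of that proposition then tells us that $K$ is flag-transitive on $\Dmc(\Delta)$ if and only if the setwise stabiliser $K_{B(\Delta)}$ is transitive on the points lying in $B(\Delta)$. When $m=n$ the same reasoning with $G$ and $\Bmc_G$ in place of $K$ and $\Bmc_K$ shows that $G$ is flag-transitive on $\Dhat(\Delta)$ if and only if $G_{B(\Delta)}$ is transitive on the points of $B(\Delta)$.

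It then remains only to identify transitivity on the points of $B(\Delta)$ with transitivity on the edges of $\Delta$. By the definition of $B(\Delta)$ in Construction~\ref{con1}(a), the assignment $\{R_i,C_j\}\mapsto(R_i,C_j)$ is a bijection from the edge set of $\Delta$ onto $B(\Delta)$. I would then verify that this bijection is equivariant for the action of $K_{B(\Delta)}$: as already noted just before the statement, an element $g\in K_{B(\Delta)}$ fixes $B(\Delta)$ setwise, and since the map $\Delta\mapsto B(\Delta)$ is injective and $B(\Delta^g)=B(\Delta)^g$, such a $g$ fixes $\Delta$ setwise and so induces an automorphism of $\Delta$. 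Because $g$ sends the point $(R_i,C_j)$ to $(R_i^g,C_j^g)$ and, as an automorphism of $\Delta$, sends the edge $\{R_i,C_j\}$ to $\{R_i^g,C_j^g\}$, the bijection intertwines the two actions.

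Given such an equivariant bijection, $K_{B(\Delta)}$ is transitive on the points of $B(\Delta)$ if and only if it is transitive on the edges of $\Delta$; combined with the first paragraph this proves the statement for $K$, and the identical argument with $G$ in place of $K$ completes the proof. The argument is essentially a matter of unwinding definitions, so I do not anticipate a genuine obstacle; the only point requiring care is the equivariance of the edge-point correspondence, which follows at once from the description of the product action in Construction~\ref{con1}(c).
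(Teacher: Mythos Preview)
Your proposal is correct and follows exactly the approach the paper takes: the corollary is stated as an immediate consequence of the final sentence of Proposition~\ref{prop:CP}, together with the observation (made just before the statement) that $K_{B(\Delta)}$ and $G_{B(\Delta)}$ induce automorphisms of $\Delta$, so that the natural bijection between points of $B(\Delta)$ and edges of $\Delta$ is equivariant. You have simply made explicit the routine verification of equivariance that the paper leaves to the reader.
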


We conclude this section with a technical remark on the work in \cite{a:Braic-18} which supplements Remark~\ref{rem:bmv}.

\begin{remark}
	As we mentioned in Remark~\ref{rem:bmv}, in the flag-transitive case, the authors of \cite{a:Braic-18} work with the incidence substructure induced on the sets of
	points and blocks incident to some edge of $\Delta$.  It follows from Corollary~\ref{lem:flagtr} that
	this substructure is a $K_{B(\Delta)}$-flag-transitive $1$-design if $K$ is flag-transitive on $\Dmc(\Delta)$, and is weakly $G_{B(\Delta)}$-flag-transitive if $m=n$ and $G$ is flag-transitive on $\Dhat(\Delta)$. The latter  situation is more complicated and is investigated in detail in \cite[Section 2]{a:Braic-18}. In our situation, where the actions are block-transitive, but not necessarily flag-transitive, we do not get a reduction to a smaller substructure with similar properties. Instead we  characterise the block-transitive designs by graph theoretic conditions on $\Delta$, and we explore such conditions in the next section.
\end{remark}

\section{Parametric conditions for \mathinhead{{t}}{t}-designs}\label{sec:tdesigns}

In a $1$-$(v,k,\lambda)$ design $\Dmc(\Pmc,\Bmc)$ we
denote the number of blocks by $b$, and the number of blocks containing a given point by $r$.
The quantities $v$, $b$, $r$, $k$, $\lambda$ are often referred to as the \emph{parameters} of $\Dmc$.
For the $1$-$(mn,k,\lambda_1)$ design $\Dmc(\Delta)$ and the $1$-$(m^2,k,\lambda_1')$ design $\Dhat(\Delta)$ introduced in Construction~\ref{con1}  we  also defined additional parameters $x_i, y_j$.
Here we explore conditions on these additional parameters, and on the graph $\Delta$, under which these incidence structures are $t$-designs for $t>1 $. Note that this means that $t$ is $2$ or $3$ by Lemma \ref{lem:not4}.

\subsection{The design \mathinhead{\Dmc(\Delta)}{mathcal{D}(Delta)}}\label{sub:dmc1}

In the case of the $K$-action on the design $\Dmc(\Delta)$,
appropriate conditions are given in \cite{a:CamPr-btI-93}, and we state them here, together with graph theoretic equivalents  and with a computation of the parameter $\lambda$.

\begin{proposition}[{\cite[Propositions 2.2 and 3.6]{a:CamPr-btI-93}}]\label{prop:K}
	Let $\Delta, K, \Dmc(\Delta), x_i, y_j$ be as in Construction~$\ref{con1}$. Then
	\begin{enumerate}[{\rm (a)}]
		\item $\Dmc(\Delta)$ is a $2$-design if and only if
		\begin{enumerate}[{\rm (i)}]
			\item $\displaystyle{\sum_{i=1}^m \binom{x_i}{2}=\frac{k(k-1)(n-1)}{2(mn-1)}=\ \mbox{number of $2$-paths in $\Delta$ of type $\Rmc$}}$; and
			\item $\displaystyle{\sum_{j=1}^n\binom{y_j}{2}=\frac{k(k-1)(m-1)}{2(mn-1)}=\ \mbox{number of $2$-paths in $\Delta$ of type $\Cmc$}}$.
		\end{enumerate}
		If these conditions hold, then  $\Dmc(\Delta)$ is a $2$-$(mn,k,\lambda)$ design with
		\[
		\lambda=\frac{k(k-1)(m-1)!(n-1)!}{(mn-1)|K_\Delta|}.
		\]
		\item $\Dmc(\Delta)$ is a $3$-design if and only if all the following hold:
		\begin{enumerate}[{\rm (i)}]
			\item the conditions in part (a) hold;
			\item the number of $3$-claws in $\Delta$ of type $\Rmc$ is
			\[
			\sum_{i=1}^m \binom{x_i}{3}=\frac{k(k-1)(k-2)(n-1)(n-2)}{6(mn-1)(mn-2)};
			\]
			\item the number of $3$-claws in $\Delta$ of type $\Cmc$ is
			\[
			\sum_{j=1}^n \binom{y_j}{3}=\frac{k(k-1)(k-2)(m-1)(m-2)}{6(mn-1)(mn-2)};
			\]
			\item the number of $3$-paths  in $\Delta$ is $\displaystyle{\frac{k(k-1)(k-2)(m-1)(n-1)}{(mn-1)(mn-2)}}$.
		\end{enumerate}
		If these conditions hold, then  $\Dmc(\Delta)$ is a $3$-$(mn,k,\lambda)$ design with
		\[
		\lambda=\frac{k(k-1)(k-2)(m-1)!(n-1)!}{(mn-1)(mn-2)|K_\Delta|}.
		\]
	\end{enumerate}
\end{proposition}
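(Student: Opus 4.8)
The plan is to apply Proposition~\ref{prop:CP} to the group $K=\S_m\times\S_n$ acting in product action on $\Pmc$, once with $t=2$ and once with $t=3$. The first task is to determine the orbits $\Omc_1,\dots,\Omc_s$ of $K$ on $\Pmc^{\{t\}}$ together with their sizes, and then to express each intersection number $\n_i=|B(\Delta)^{\{t\}}\cap\Omc_i|$ in terms of the parameters $x_i,y_j$ by means of Lemma~\ref{lem:2-arcs}. The design criterion $\n_1/|\Omc_1|=\cdots=\n_s/|\Omc_s|=c$ with $c=\binom{k}{t}/\binom{mn}{t}$ then produces the stated equalities, and the value of $\lambda$ comes from the relation $c=\lambda/|\Bmc|$ in Proposition~\ref{prop:CP}.

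For $t=2$ I would identify a $2$-set $\{(R_i,C_j),(R_{i'},C_{j'})\}$ with the corresponding pair of edges of $\Kbf_{m,n}$, so that its $K$-orbit is determined by the isomorphism type of that $2$-edge subgraph. There are exactly three types, according to whether the two edges share an $\Rmc$-vertex (same row), share a $\Cmc$-vertex (same column), or are disjoint; the orbit sizes are $|\Omc_1|=m\binom{n}{2}$, $|\Omc_2|=n\binom{m}{2}$ and $|\Omc_3|=2\binom{m}{2}\binom{n}{2}$. By Lemma~\ref{lem:2-arcs}(b), $\n_1=\sum_i\binom{x_i}{2}$ and $\n_2=\sum_j\binom{y_j}{2}$ are the numbers of $2$-paths of types $\Rmc$ and $\Cmc$. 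Substituting $c=\binom{k}{2}/\binom{mn}{2}$ into $\n_1=c|\Omc_1|$ and $\n_2=c|\Omc_2|$ gives precisely conditions (a)(i) and (a)(ii); the third equality $\n_3=c|\Omc_3|$ is then automatic, since $\sum_i\n_i=\binom{k}{2}=c\binom{mn}{2}=c\sum_i|\Omc_i|$.

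For $t=3$ the same identification shows that the $K$-orbits correspond to the six isomorphism types of $3$-edge subgraphs of $\Kbf_{m,n}$: the two $3$-claws ($\Kbf_{1,3}$ of type $\Rmc$ and type $\Cmc$), the $3$-path $P_3$, the $3$-matching $3\Kbf_2$, and the two graphs $P_2\cup\Kbf_2$ given by a $2$-path of type $\Rmc$ or $\Cmc$ together with a disjoint edge. Label these orbits $\Omc_1,\dots,\Omc_6$ in this order. I would compute the orbit sizes ($|\Omc_1|=m\binom{n}{3}$, $|\Omc_2|=n\binom{m}{3}$, $|\Omc_3|=4\binom{m}{2}\binom{n}{2}$, and similarly for the rest), and by Lemma~\ref{lem:2-arcs}(c) identify $\n_1,\n_2,\n_3$ as the numbers of $3$-claws of type $\Rmc$, of type $\Cmc$, and of $3$-paths; imposing $\n_i=c|\Omc_i|$ with $c=\binom{k}{3}/\binom{mn}{3}$ yields (b)(ii)--(iv). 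The key point is that the remaining three equalities need not be imposed separately: counting pairs consisting of a type-$\Rmc$ $2$-path of $B(\Delta)$ and an extra point of $B(\Delta)$ gives, for every $\Delta$, the universal identity
\[
(k-2)\sum_{i=1}^m\binom{x_i}{2}=3\n_1+\n_3+\n_5 ,
\]
together with its type-$\Cmc$ analogue $(k-2)\sum_j\binom{y_j}{2}=3\n_2+\n_3+\n_6$ and the relation $\sum_{i=1}^6\n_i=\binom{k}{3}$. These three relations express $\n_4,\n_5,\n_6$ in terms of the $2$-path counts $\sum_i\binom{x_i}{2}$, $\sum_j\binom{y_j}{2}$ and of $\n_1,\n_2,\n_3$.

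The step I expect to be the main obstacle is exactly this reduction: one must verify that the values forced on $\n_4,\n_5,\n_6$ by the counting identities coincide with the target values $c|\Omc_4|,c|\Omc_5|,c|\Omc_6|$. Concretely, using that a $3$-design is a $2$-design one assumes (a)(i)--(a)(ii), and then substitutes the explicit required values (for instance $\sum_i\binom{x_i}{2}=\tfrac{k(k-1)(n-1)}{2(mn-1)}$ and $\n_3=\tfrac{k(k-1)(k-2)(m-1)(n-1)}{(mn-1)(mn-2)}$) into the identities above to check the resulting polynomial equalities in $m,n,k$; this is routine but must be done carefully. Finally, for both $t=2$ and $t=3$ I would read off $\lambda$ from Proposition~\ref{prop:CP}: since $|\Bmc|=|K|/|K_\Delta|=m!\,n!/|K_\Delta|$ and $\lambda=c\,|\Bmc|$, the factor $mn$ in the denominator of $c$ cancels against $m!\,n!$ to leave $(m-1)!\,(n-1)!$, giving the two stated formulas for $\lambda$.
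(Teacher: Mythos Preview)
Your proposal is correct and follows essentially the same approach as the paper. The paper's own proof of this proposition is very brief: it cites \cite[Proposition~3.6]{a:CamPr-btI-93} for the design criteria, invokes Lemma~\ref{lem:2-arcs} for the graph-theoretic interpretation, and then computes $\lambda$ from $\lambda=\binom{k}{t}b/\binom{v}{t}$ with $b=m!\,n!/|K_\Delta|$, remarking that the omitted details are ``very similar to the proof we give of'' Theorem~\ref{prop:G2}. Your outline is exactly that similar proof, carried out for $K$ rather than $G$: the three $K$-orbits on $\Pmc^{\{2\}}$ and six $K$-orbits on $\Pmc^{\{3\}}$ replace the two and four $G$-orbits used in the proof of Theorem~\ref{prop:G2}, and your counting identity $(k-2)\sum_i\binom{x_i}{2}=3\n_1+\n_3+\n_5$ (together with its type-$\Cmc$ analogue) is the $K$-version of the argument with the set $\mathcal{T}$ of pairs $(P,E)$ used there to show that the remaining orbit conditions are forced by the $2$-path condition.
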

\begin{proof}
	We refer to \cite[Proposition 3.6]{a:CamPr-btI-93} for the conditions to be a $2$- or $3$-design. (We note that no details are given of the proof of  but its proof is very similar to the proof we give of Lemma \ref{prop:G2}.) The graph theoretic conditions (number of $2$-paths of type $\Rmc$, etc.) follow Lemma~\ref{lem:2-arcs}.
	
	Now we compute the parameter $\lambda$ in each case,  using that $\lambda=\frac{\binom{k}{t}}{\binom{v}{t}}b$, where $b=|\Bmc|$, by Proposition \ref{prop:CP}.
	Since $K$ is block-transitive, we have that
	\[
	b=\frac{|K|}{|K_\Delta|}=\frac{m!n!}{|K_\Delta|}.
	\]
	
	Assume first $\Dmc(\Delta)$ is a $2$-design. Then
	\[
	\lambda= \frac{k(k-1)}{mn(mn-1)}\cdot b.
	\]
	Substituting for $b$ from the displayed equation above we get $\lambda$ as in the statement.
	Assume now $\Dmc(\Delta)$ is a $3$-design. Then
	\[
	\lambda= \frac{k(k-1)(k-2)}{mn(mn-1)(mn-2)}\cdot b.
	\]
	Substituting for $b$ from the displayed equation above we get $\lambda$ as in the statement.
\end{proof}

The conditions in this result simplify when $m=n$ and it is worthwhile stating them.

\begin{corollary}\label{cor:K}
	Let $\Delta, K, \Dmc(\Delta), x_i, y_j$ be as in Construction~$\ref{con1}$ with $m=n$. Then
	\begin{enumerate}[{\rm (a)}]
		\item $\Dmc(\Delta)$ is a $2$-design if and only if the number of $2$-paths in $\Delta$ of type $\Rmc$ is equal to the number of $2$-paths in $\Delta$ of type $\Cmc$, and this number is
		\[
		\sum_{i=1}^m \binom{x_i}{2}=\sum_{j=1}^m \binom{y_j}{2}=\frac{k(k-1)}{2(m+1)}.
		\]
		If these conditions hold, then $\Dmc(\Delta)$ is a $2$-$(m^2,k,\lambda)$ design with
		\begin{equation}\label{e:lambda-2}
			\lambda=\frac{k(k-1)(m-1)!(m-2)!}{(m+1)|K_\Delta|}.
		\end{equation}
		
		\item $\Dmc(\Delta)$ is a $3$-design if and only if the conditions in part (a) hold and also the following two  additional conditions hold:
		\begin{enumerate}[{\rm (i)}]
			\item the number of $3$-claws in $\Delta$ of type $\Rmc$ is equal to the number of $3$-claws in $\Delta$ of type $\Cmc$, and this number is
			\[
			\sum_{i=1}^m \binom{x_i}{3}=\sum_{j=1}^m \binom{y_j}{3}=
			\frac{k(k-1)(k-2)(m-2)}{6(m+1)(m^2-2)};
			\]
			\item the number of $3$-paths in $\Delta$ is $\displaystyle{\frac{k(k-1)(k-2)(m-1)}{(m+1)(m^2-2)}}$.
			
		\end{enumerate}
		If these conditions hold, then  $\Dmc(\Delta)$ is a $3$-$(m^2,k,\lambda)$ design with 	
		\begin{equation}\label{e:lambda-3}
			\lambda=\frac{k(k-1)(k-2)(m-1)!(m-2)!}{(m+1)(m^2-2)|K_\Delta|}.
		\end{equation}
	\end{enumerate}
\end{corollary}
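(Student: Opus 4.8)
The plan is to derive Corollary~\ref{cor:K} directly from Proposition~\ref{prop:K} by specialising to $m=n$ and carrying out the resulting algebraic simplifications; no new structural input is required. First I would observe that, when $m=n$, the two right-hand sides in Proposition~\ref{prop:K}(a)(i) and (a)(ii) coincide: each becomes $\frac{k(k-1)(m-1)}{2(m^2-1)}$, which reduces to $\frac{k(k-1)}{2(m+1)}$ upon factoring $m^2-1=(m-1)(m+1)$. Hence the conjunction of conditions (a)(i) and (a)(ii) is equivalent to requiring that both the type-$\Rmc$ and type-$\Cmc$ $2$-path counts equal this common value $\frac{k(k-1)}{2(m+1)}$. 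The phrasing ``the two counts are equal, and this common number is $\frac{k(k-1)}{2(m+1)}$'' in Corollary~\ref{cor:K}(a) is merely a repackaging of this equivalence, and the iff transfers from Proposition~\ref{prop:K} automatically.

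For the $3$-design conditions I would proceed identically. Setting $m=n$ in Proposition~\ref{prop:K}(b)(ii) and (b)(iii) makes both right-hand sides equal to $\frac{k(k-1)(k-2)(m-1)(m-2)}{6(m^2-1)(m^2-2)}$, which reduces to $\frac{k(k-1)(k-2)(m-2)}{6(m+1)(m^2-2)}$ via $\frac{m-1}{m^2-1}=\frac{1}{m+1}$; this yields Corollary~\ref{cor:K}(b)(i), again with the two $\Rmc$/$\Cmc$ conditions collapsing into a single ``equal counts with prescribed common value'' statement. Condition (b)(iv) specialises using $\frac{(m-1)^2}{m^2-1}=\frac{m-1}{m+1}$ to give the $3$-path count $\frac{k(k-1)(k-2)(m-1)}{(m+1)(m^2-2)}$ of Corollary~\ref{cor:K}(b)(ii).

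Finally, for the parameter $\lambda$ in both parts I would substitute $m=n$ into the formulas of Proposition~\ref{prop:K} and simplify using the identity $\frac{((m-1)!)^2}{m^2-1}=\frac{(m-1)!\,(m-2)!}{m+1}$, which follows from $(m-1)!=(m-1)(m-2)!$ together with $m^2-1=(m-1)(m+1)$. This immediately turns $\frac{k(k-1)((m-1)!)^2}{(m^2-1)|K_\Delta|}$ into \eqref{e:lambda-2}, and the analogous substitution, carrying the additional factor $(m^2-2)$ in the denominator, yields \eqref{e:lambda-3}.

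Since every step is a substitution followed by an elementary factorisation, there is no genuine obstacle here; the only point requiring a moment's care is the observation that the two separate $\Rmc$/$\Cmc$ conditions of Proposition~\ref{prop:K} become identical once $m=n$, so that each pair can be replaced by a single equality-of-counts statement. Noting this equivalence is precisely what licenses the cleaner reformulation recorded in the corollary.
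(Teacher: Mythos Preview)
Your proposal is correct and is exactly the intended derivation: the paper presents Corollary~\ref{cor:K} without proof, immediately after Proposition~\ref{prop:K}, as the direct specialisation to $m=n$, and your substitutions and factorisations reproduce precisely that.
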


\subsection{The designs \mathinhead{\Dmc(\Delta)}{mathcal{D}(Delta)} and \mathinhead{\Dhat(\Delta)}{dh} when \mathinhead{\Dmc(\Delta)}{mathcal{D}(Delta)} is a $2$-design}\label{sub:dmc2}

When $m=n$, and in the special case where $\Delta$ has the property that the map $\tau$ in Construction~\ref{con1} induces an isomorphism from $\Delta$ to $\Delta^x$ for some $x\in K$, the group $G$ acts as a group of automorphisms of $\Dmc(\Delta)$, by Lemma~\ref{lem:flagtr}(b), and in this case the two designs $\Dmc(\Delta)$ and $\Dhat(\Delta)$ are equal. Moreover $\Aut(\Delta)$ is the setwise stabiliser $G_{B(\Delta)}$ with $B(\Delta)$ as in  Construction~\ref{con1}(a), and in this case  $\Aut(\Delta)$ contains $K_{B(\Delta)}$ as a subgroup of index $2$ and interchanges the parts $\Rmc$ and $\Cmc$ of the bipartition of $\Kbf_{m,m}$. As a consequence the number of $2$-paths in $\Delta$ of type $\Rmc$ and of type $\Cmc$ are equal, and the number of $3$-claws in $\Delta$ of type $\Rmc$ and of type $\Cmc$ are equal. This leads to a further simplification of the conditions in Corollary~\ref{cor:K}.

\begin{proposition}\label{prop:G}
	Let $\Delta, K, \tau, \Dmc(\Delta), \Dhat(\Delta), x_i, y_j$ be as in Construction~$\ref{con1}$, and suppose that  $m=n$ and that $\Delta^\tau=\Delta^x$ for some $x\in K$, so that $\Dhat(\Delta)=\Dmc(\Delta)$. Then
	\begin{enumerate}[{\rm (a)}]
		\item $\Dmc(\Delta)$ is a $2$-design if and only if the number of $2$-paths in $\Delta$ is
		\[
		\sum_{i=1}^m x_i(x_i-1) = \sum_{j=1}^m y_j(y_j-1) = \frac{k(k-1)}{m+1},
		\]
		and if this is the case then $\Dmc(\Delta)$ is a $2$-$(m^2,k,\lambda)$ design with
		$\lambda$ as in \eqref{e:lambda-2}.
		\item $\Dmc(\Delta)$ is a $3$-design if and only if the conditions in part (a) hold and also the following two  additional conditions hold:
		\begin{enumerate}[{\rm (i)}]
			\item the number of $3$-claws in $\Delta$ is
			\[
			2\sum_{i=1}^m \binom{x_i}{3}=2\sum_{j=1}^m \binom{y_j}{3}=
			\frac{k(k-1)(k-2)(m-2)}{3(m+1)(m^2-2)};
			\]
			\item the number of $3$-paths  in $\Delta$ is $\displaystyle{\frac{k(k-1)(k-2)(m-1)}{(m+1)(m^2-2)}}$.
		\end{enumerate}
		If these conditions hold, then  $\Dmc(\Delta)$ is a $3$-$(m^2,k,\lambda)$ design with 	$\lambda$ as in \eqref{e:lambda-3}.
	\end{enumerate}
\end{proposition}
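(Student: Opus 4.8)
The plan is to derive Proposition~\ref{prop:G} directly from Corollary~\ref{cor:K}, using the hypothesis $\Delta^\tau=\Delta^x$ only to discard the two ``type~$\Rmc$ versus type~$\Cmc$'' equality conditions appearing there, and then to rewrite the surviving numerical conditions in terms of total counts of $2$-paths and $3$-claws.

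First I would make the symmetry explicit. Viewed as an automorphism of $\Kbf_{m,m}$, the map $\tau$ interchanges the two vertex classes (sending each $R_i$ to $C_i$ and conversely), so it carries a $2$-path of type~$\Rmc$ to one of type~$\Cmc$ and vice versa, and likewise swaps the two types of $3$-claw; by contrast every element of $K$ fixes both classes setwise and hence preserves types. Comparing, via Lemma~\ref{lem:2-arcs}(b),(c), the type-$\Rmc$ and type-$\Cmc$ counts in the two equal graphs $\Delta^\tau$ and $\Delta^x$ then forces
\[
\sum_{i=1}^m\binom{x_i}{2}=\sum_{j=1}^m\binom{y_j}{2}\quad\text{and}\quad\sum_{i=1}^m\binom{x_i}{3}=\sum_{j=1}^m\binom{y_j}{3},
\]
which is the equinumerosity already recorded before the statement. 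This is the only place the hypothesis enters.

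For part~(a) I would invoke Corollary~\ref{cor:K}(a): its first requirement, that the two type counts of $2$-paths agree, now holds automatically, leaving only $\sum\binom{x_i}{2}=k(k-1)/(2(m+1))$. Since $2\binom{x_i}{2}=x_i(x_i-1)$ and, by the equinumerosity, the total number of $2$-paths equals $\sum\binom{x_i}{2}+\sum\binom{y_j}{2}=\sum_i x_i(x_i-1)=\sum_j y_j(y_j-1)$, this is exactly the displayed identity of Proposition~\ref{prop:G}(a), and $\lambda$ is inherited verbatim from \eqref{e:lambda-2}. For part~(b) I would apply Corollary~\ref{cor:K}(b): part~(a) is already in hand, the equality-of-types clause in its condition~(i) is automatic, and its magnitude clause $\sum\binom{x_i}{3}=k(k-1)(k-2)(m-2)/(6(m+1)(m^2-2))$ doubles under the equinumerosity to the total $3$-claw count $2\sum\binom{x_i}{3}$, giving Proposition~\ref{prop:G}(b)(i); condition~(ii) on the number of $3$-paths carries over unchanged, since odd-length paths form a single $K$-orbit and so have no type, and $\lambda$ is again inherited from \eqref{e:lambda-3}.

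There is no deep difficulty here: the whole argument is a reduction of Corollary~\ref{cor:K} together with factor-of-two bookkeeping relating $\binom{x_i}{2}$, $x_i(x_i-1)$, and single-type versus total counts. The one step deserving care --- and the only place an error could plausibly creep in --- is the claim that $\tau$ genuinely interchanges the two $2$-path types and the two $3$-claw types; this has to be checked at the level of the middle (respectively valency-$3$) vertex, which $\tau$ moves from $\Rmc$ to $\Cmc$.
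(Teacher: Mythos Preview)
Your proposal is correct and matches the paper's approach: the paper does not give a separate proof of this proposition, but the paragraph immediately preceding it explains that the hypothesis forces $\Aut(\Delta)$ to contain an element interchanging $\Rmc$ and $\Cmc$, whence the type-$\Rmc$ and type-$\Cmc$ counts of $2$-paths and of $3$-claws coincide, so that the proposition is simply Corollary~\ref{cor:K} with those equality clauses absorbed and the per-type counts doubled to totals. Your version spells out the type-swapping argument a bit more explicitly (comparing counts in $\Delta^\tau$ and $\Delta^x$ directly rather than passing through the element $\tau x^{-1}\in\Aut(\Delta)$), but this is the same idea.
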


\begin{remark}
	On the other hand, if $m=n$, and if the graph $\Delta^\tau$  is not the image of $\Delta$ under any element of $K$, then the incidence structure $\Dhat(\Delta)$ has twice as many blocks as $\Dmc(\Delta)$. We note that this condition that `$\Delta^\tau\ne \Delta^x$ for any $x\in K$' is equivalent to the condition  $\Aut(\Delta)=K_{B(\Delta)}$.
	Moreover, the condition that $\Dmc(\Delta)$ is a $t$-design implies that also $\Dhat(\Delta)$ is a $t$-design, for $t\in\{2,3\}$ (see \cite[ Proposition~1.1]{a:CamPr-btI-93}). Thus the conditions given in Corollary~\ref{cor:K} are \emph{sufficient, but not necessarily necessary} for $\Dhat(\Delta)$ to be a $t$-design, for $t=2, 3$.
\end{remark}

\subsection{The design \mathinhead{\Dhat(\Delta)}{dh} when \mathinhead{\Dmc(\Delta)}{\mathcal{D}(Delta)} is not necessarily a $2$-design} \label{sub:dhat}

Here we consider the case where $m=n$, and where $\Dmc(\Delta)$ is not necessarily a $2$-design.
We use Proposition~\ref{prop:CP} to determine conditions for $\Dhat(\Delta)$ to be a $t$-design, where $t=2, 3$. We clarify in Remark~\ref{rem:dhat} how to decide whether or not $\Dmc(\Delta)$ is a $t$-design for $t=2$ or $3$ from these conditions.

\begin{theorem}\label{prop:G2}
	Let $\Delta, G, \tau, \Dhat(\Delta), x_i, y_j$ be as in Construction~$\ref{con1}$, and suppose that  $m=n$. Then
	\begin{enumerate}[{\rm (a)}]
		\item $\Dhat(\Delta)$ is a $2$-design if and only if the number of $2$-paths in $\Delta$ is $k(k-1)/(m+1)$, or equivalently,
		\[
		\sum_{i=1}^m \binom{x_i}{2}+\sum_{j=1}^n \binom{y_j}{2}=\frac{k(k-1)}{m+1}.
		\]
		If this condition holds, then  $\Dhat(\Delta)$ is a $2$-$(m^2,k,\lambda)$ design with
		\[
		\lambda=\frac{2k(k-1)(m-1)!(m-2)!}{(m+1) |G_\Delta|}.
		\]
		
		\item $\Dhat(\Delta)$ is a $3$-design if and only if the condition in part (a) holds and also the following two  additional conditions hold:
		\begin{enumerate}[{\rm (i)}]
			\item the number of $3$-claws in $\Delta$ is
			\[
			\sum_{i=1}^m\binom{x_i}{3}+\sum_{j=1}^m \binom{y_j}{3}=
			\frac{k(k-1)(k-2)(m-2)}{3(m+1)(m^2-2)};
			\]
			\item the number of $3$-paths in $\Delta$ is $\displaystyle{\frac{k(k-1)(k-2)(m-1)}{(m+1)(m^2-2)}}$.
		\end{enumerate}
		If these conditions hold, then  $\Dhat(\Delta)$ is a $3$-$(m^2,k,\lambda)$ design with
		\[
		\lambda=\frac{2k(k-1)(k-2)(m-1)!(m-2)!}{(m+1)(m^2-2) |G_\Delta|}.
		\]
	\end{enumerate}
\end{theorem}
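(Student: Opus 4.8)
The plan is to apply the folklore criterion of Proposition~\ref{prop:CP} to the $G$-action on $\Pmc$: identify the orbits of $G=\S_m\wr\S_2$ on the $t$-sets of $\Pmc$, compute their sizes $|\Omc_i|$ and the intersection numbers $\n_i=|B(\Delta)^{\{t\}}\cap\Omc_i|$, and then read off when the ratios $\n_i/|\Omc_i|$ are all equal. Throughout I identify a $t$-set of points with the corresponding $t$-edge subgraph of $\Kbf_{m,m}$, so that each $\n_i$ becomes a graph-theoretic count for $\Delta$ (evaluated via Lemma~\ref{lem:2-arcs}), and $\tau$ acts as the graph automorphism interchanging $R_\ell\leftrightarrow C_\ell$, hence swapping subgraphs of type $\Rmc$ and type $\Cmc$.

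For part (a) ($t=2$) the $2$-edge subgraphs of $\Kbf_{m,m}$ fall into the $K$-orbits of $2$-paths (of type $\Rmc$ and type $\Cmc$) and of disjoint edge-pairs; since $\tau$ fuses the two path-types, $G$ has exactly two orbits $\Omc_1$ (all $2$-paths) and $\Omc_2$ (disjoint pairs). I would compute $|\Omc_1|=2m\binom{m}{2}=m^2(m-1)$ and $|\Omc_2|=\binom{m^2}{2}-|\Omc_1|$, and note that $\n_1=\sum_i\binom{x_i}{2}+\sum_j\binom{y_j}{2}$ is the number of $2$-paths in $\Delta$, while $\n_2=\binom{k}{2}-\n_1$. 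With only two orbits, the identities $\n_1+\n_2=\binom{k}{2}$ and $|\Omc_1|+|\Omc_2|=\binom{m^2}{2}$ force the common value, when it exists, to equal $c=\binom{k}{2}/\binom{m^2}{2}$; hence by Proposition~\ref{prop:CP} the structure is a $2$-design if and only if the single equation $\n_1=c\,|\Omc_1|$ holds, which simplifies to $\n_1=k(k-1)/(m+1)$, as stated. The value $\lambda=c\,b$ with $b=|G|/|G_\Delta|=2(m!)^2/|G_\Delta|$ then reduces to the displayed formula after substituting $m^2-1=(m-1)(m+1)$ and $(m!)^2=m^2((m-1)!)^2$.

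For part (b) ($t=3$) I would classify the $3$-edge subgraphs into the connected types $P_3$ and $K_{1,3}$ and the disconnected types $P_2+K_2$ and $3K_2$; recording the type ($\Rmc$ or $\Cmc$) where relevant gives six $K$-orbits, which $\tau$ fuses into four $G$-orbits $\Omc_1=3K_2$, $\Omc_2=P_2+K_2$, $\Omc_3=P_3$, $\Omc_4=K_{1,3}$. I would compute $|\Omc_4|=2m\binom{m}{3}$, $|\Omc_3|=m^2(m-1)^2$ (each $3$-path is determined by its middle edge together with an independent further end at each of its two endpoints), $|\Omc_2|=m^2(m-1)^2(m-2)$, and $|\Omc_1|$ by subtraction. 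The relevant intersection numbers are $\n_4=\sum_i\binom{x_i}{3}+\sum_j\binom{y_j}{3}$ (all $3$-claws of $\Delta$) and $\n_3=$ the number of $3$-paths of $\Delta$, and a direct computation shows that the equations $\n_4=c\,|\Omc_4|$ and $\n_3=c\,|\Omc_3|$, with $c=\binom{k}{3}/\binom{m^2}{3}$, are precisely conditions (i) and (ii). For the forward implication, a $3$-design is automatically a $2$-design, giving condition~(a), while equality of the $\Omc_3$- and $\Omc_4$-ratios with $c$ gives (ii) and (i).

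The crux is the reverse implication, where conditions (a), (i), (ii) supply only three of the four required orbit equations, and I must recover the $\Omc_2$-ratio without assuming anything about $P_2+K_2$ subgraphs directly. Here I would double-count the pairs $(P,e)$ with $P$ a $2$-path of $\Delta$ and $e$ a further edge of $\Delta$: this total equals (number of $2$-paths)$\,\cdot(k-2)$, and classifying the $3$-edge subgraph $P\cup e$ shows that each $K_{1,3}$ arises from three such pairs, each $P_3$ from two, and each $P_2+K_2$ from exactly one, whence
\[
\n_2=(\text{number of $2$-paths in }\Delta)\,(k-2)-3\n_4-2\n_3.
\]
Substituting the number of $2$-paths from condition~(a) and $\n_3,\n_4$ from (ii), (i), an algebraic simplification gives $\n_2=c\,|\Omc_2|$; the final orbit then satisfies $\n_1=c\,|\Omc_1|$ automatically from $\sum_i\n_i=\binom{k}{3}$ and $\sum_i|\Omc_i|=\binom{m^2}{3}$. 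Thus all four ratios equal $c$, Proposition~\ref{prop:CP} yields the $3$-design, and $\lambda=c\,b$ reduces to the stated expression exactly as in part (a). The main obstacle is precisely this identity and its attendant algebra: it explains why no separate condition on matchings or on $P_2+K_2$ subgraphs is needed, the $2$-design condition doing double duty by fixing the $2$-path count that then controls the count of $P_2+K_2$ subgraphs.
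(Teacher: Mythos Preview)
Your proposal is correct and follows essentially the same approach as the paper: both identify the two $G$-orbits on $2$-sets and the four $G$-orbits on $3$-sets (your labelling simply reverses the paper's), and both handle the $P_2+K_2$ orbit via the identical double-count of pairs $(P,e)$ with $P$ a $2$-path and $e$ a further edge, showing that this orbit equation is forced by condition~(a) together with (i) and (ii). The only cosmetic difference is that in the forward direction you obtain condition~(a) by invoking ``a $3$-design is a $2$-design'', whereas the paper obtains it by showing directly that the $P_2+K_2$ orbit equation is equivalent to~(a); both are valid.
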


\begin{proof}
	We apply Proposition~\ref{prop:CP}. First we consider necessary and sufficient conditions for $\Dhat(\Delta)$ to be a $2$-design.
	The group $G$ has just $2$ orbits on unordered pairs from $\Pmc$, namely the set
	\[
	\Oo=\{\{(R, C),(R', C')\}\mid R, R'\in \Rmc,\ C, C'\in\Cmc, \text{ and } R\neq R', C\neq C' \},
	\]
	and its complement $\Oi=\Pmc^{\{2\}}\setminus \Oo$. Note that $|\Oi|=m^2(m-1)$ and $|\Oi|+|\Oo|= |\Pmc^{\{2\}}|
	= m^2(m^2-1)/2$.  For $B=B(\Delta)$, and $X\in\{ \iota, o\}$, let
	$\nx=|\Ox\cap B^{\{2\}}|$ (note this number does not depend on the choice of $B$ in the block-set of $\Dhat(\Delta)$). Then by Proposition~\ref{prop:CP}, $\Dhat(\Delta)$ is a $2$-design if and only if $\ni/|\Oi|=\no/|\Oo|$, that is, $\ni|\Oo|=\no|\Oi|$. Adding $\ni|\Oi|$ to each side, and noting that $\ni+\no=\binom{k}{2}$, we see that this holds if and only if
	\[
	\ni \binom{m^2}{2} = \ni (|\Oi| + |\Oo|) = (\ni+\no) |\Oi| = \binom{k}{2} m^2(m-1),
	\]
	which is equivalent to the condition $\ni= k(k-1)/(m+1)$. Thus we have proved that $\Dhat(\Delta)$ is a $2$-design if and only if $\ni= k(k-1)/(m+1)$.
	
	Now a pair $\{(R, C), (R,C')\}\in \Oi\cap B^{\{2\}}$ corresponds to a  $2$-path with vertex set $\{C,R,C'\}$ in $\Delta$ of type $\Rmc$, and similarly a pair $\{(R, C), (R',C)\}\in \Oi\cap B^{\{2\}}$ corresponds to a $2$-path in $\Delta$ of type $\Cmc$. These correspondences are one-to-one, and so $ \ni$ is the number of $2$-paths in $\Delta$, and by Lemma~\ref{lem:2-arcs}, this number is
	$\sum_{i=1}^m \binom{x_i}{2}+\sum_{j=1}^n \binom{y_j}{2}$. Thus the design criterion in part (a) is proved.
	
	Now we consider conditions under which $\Dhat(\Delta)$ is a $3$-design, again using the criterion in Proposition~\ref{prop:CP}. There are four $G$-orbits in $\Pmc^{\{3\}}$ (so we expect three conditions on the parameters). These $G$-orbits are as follows (including a description of a representative $3$-set $\{(R,C), (R',C'), (R'', C'')\}\subseteq\Pmc$ in the orbit).
	\[
	\begin{array}{l|l|l}\hline
		\mbox{Orbit} & \mbox{Description} & \mbox{Size of orbit}\\ \hline
		\Qmc_1	&	R=R'=R'' \mbox{ and $C, C', C''$ pairwise distinct,} 		& m^2(m-1)(m-2)/3\\
		&  \mbox{or $R, R', R''$ pairwise distinct }	\mbox{and }C=C'=C''		&	\\
		\Qmc_2 &	R=R'\ne R'' \mbox{ and } C=C''\ne C'	& m^2(m-1)^2\\
		\Qmc_3 &	R=R'\ne R'' \mbox{ and } C, C',C'' \mbox{ pairwise distinct,}& m^2(m-1)^2(m-2)\\
		&	\mbox{or }R, R', R'' \mbox{ pairwise distinct and }\ C=C'\ne C''	&	\\
		\Qmc_4 &	R, R', R'' \mbox{ pairwise distinct and } 	& m^2(m-1)^2(m-2)^2/6\\
		& 	C, C', C'' \mbox{ pairwise distinct }		&	\\ \hline
	\end{array}
	\]
	For $i=1.\dots,4$, let $\n_i=|\Qmc_i\cap B^{\{3\}}|$ (which does not depend on the choice of block $B$). Then by Proposition~\ref{prop:CP}, $\Dhat(\Delta)$ is a $3$-design if and only if
	\[
	\frac{\n_1}{|\Qmc_1|} = \frac{\n_2}{|\Qmc_2|} = \frac{\n_3}{|\Qmc_3|} = \frac{\n_4}{|\Qmc_4|} = \frac{\binom{k}{3}}{\binom{m^2}{3}},
	\]
	and we note that, the equation
	\begin{equation}\label{eq:ni}
		\frac{\n_i}{|\Qmc_i|} = \frac{\binom{k}{3}}{\binom{m^2}{3}}
	\end{equation}
	holding for any three values of
	$i\in\{1, 2, 3, 4\}$ implies that this equation also holds for the fourth value of $i$.
	For $i=1$, using the value for $|\Qmc_1|$,  equation \eqref{eq:ni} simplifies to
	\[
	\n_1=\frac{k(k-1)(k-2)(m-2)}{3(m+1)(m^2-2)}.
	\]
	In this case there is a one-to-one correspondence between  $\Qmc_1\cap B^{\{3\}}$ and the set of $3$-claws in $\Delta$, namely a $3$-set  $\{(R,C), (R,C'), (R, C'')\}\in\Qmc_1\cap B^{\{3\}}$ corresponds to the $3$-claw $\Kbf_{1.3}$ of type $\Rmc$ with vertex set $\{R, C, C', C''\}$, and the number of the $3$-claws of type $\Rmc$ in $\Delta$ is $\sum_{i=1}^m\binom{x_i}{3}$. Similar comments apply to the $3$-sets in $\Qmc_1\cap B^{\{3\}}$ involving a single $C\in\Cmc$, and the $3$-claws in $\Delta$ of type $\Cmc$.
	Thus $\n_1$ is equal to the number of $3$-claws in $\Delta$ and this, in turn, is equal to
	$\sum_{i=1}^m\binom{x_i}{3} + \sum_{j=1}^m\binom{y_j}{3}$. Thus  equation \eqref{eq:ni} for $i=1$ is equivalent to condition (b)(i).
	
	For $i=2$, using the value for $|\Qmc_2|$,  equation \eqref{eq:ni} simplifies to
	\[
	\n_2=\frac{k(k-1)(k-2)(m-1)}{(m+1)(m^2-2)}.
	\]
	In this case there is a one-to-one correspondence between  $\Qmc_2\cap B^{\{3\}}$ and the set of $3$-paths in $\Delta$, namely a $3$-set  $\{(R,C), (R,C'), (R'', C)\}\in\Qmc_2\cap B^{\{3\}}$ corresponds to the $3$-path with vertex set  $\{C',R,C,R''\}$ in $\Delta$.
	Thus $\n_2$ is equal to the number of $3$-paths in $\Delta$ and hence  equation \eqref{eq:ni} for $i=2$ is equivalent to condition (b)(ii).
	
	For $i=3$, using the value for $|\Qmc_3|$,  equation \eqref{eq:ni} simplifies to
	\begin{equation}\label{eq:n3}
		\n_3=\frac{k(k-1)(k-2)(m-1)(m-2) }{(m+1)(m^2-2)}.
	\end{equation}
	At this stage we have proved that: \emph{given that (b)(i) and (b)(ii) hold, the incidence structure
		$\Dhat(\Delta)$ is a $3$-design if and only if \eqref{eq:n3} holds.}
	
	Let $X$ be a $3$-subset of $\Pmc$. Then, from the description in the table above, $X\in\Qmc_3$
	if and only if the induced subgraph $[X]$ of $\Delta$ is a vertex-disjoint union $P_2 + \Kbf_2$ of a $2$-path $P_2$ and an edge $\Kbf_2$.  In order to count such $3$-subsets $X$, let us define the set
	\[
	\mathcal{T}:=\{ (P, E)\mid P \mbox{ is a $2$-path  of $\Delta$, and $E$ is an edge of $\Delta$ not in $P$}\}.
	\]
	Let $Y$ be the number of $2$-paths of $\Delta$. For each $2$-path $P$ there are $k-2$ choices for an edge $E$ such that $(P,E)\in\mathcal{T}$. Hence $|\mathcal{T}|=Y(k-2)$. Now, for each  $(P, E)\in\mathcal{T}$, the subgraph generated by the three edges in $P
	\cup E$ is one of (1) a $3$-claw, (2) a $3$-path, or (3) $P_2+\Kbf_2$.
	We know, from  (b)~(i) and (b)~(ii), the numbers of $3$-claws  and $3$-paths  in $\Delta$. Moreover,  each  $3$-claw  contains three $2$-paths, and each $3$-path  contains two $2$-paths.  Hence the number $N$ of pairs $(P, E)$ which correspond to a subgraph of type (1) or (2) is
	\[
	N=3\times \frac{k(k-1)(k-2)(m-2)}{3(m+1)(m^2-2)}  + 2\times \frac{k(k-1)(k-2)(m-1)}{(m+1)(m^2-2)}.
	\]
	Thus the number of pairs $(P, E)\in\mathcal{T}$ such that $P\cup E$ generates a subgraph $P_2+\Kbf_2$ is
	\[
	|\mathcal{T}|-N = (k-2)\left( Y - \frac{k(k-1)(m-2)}{(m+1)(m^2-2)} -  \frac{2k(k-1)(m-1)}{(m+1)(m^2-2)}\right),
	\]
	and as discussed above, this number is equal to $|\Qmc_3\cap B^{\{3\}}|=\n_3$. Thus \eqref{eq:n3} holds if and only if
	\begin{align*}
		Y 	&= \frac{k(k-1)(m-1)(m-2) }{(m+1)(m^2-2)} + \frac{k(k-1)(m-2)}{(m+1)(m^2-2)} +  \frac{2k(k-1)(m-1)}{(m+1)(m^2-2)}\\
		&=\frac{k(k-1)}{(m+1)(m^2-2)}\left( (m-1)(m-2) + (m-2) + 2(m-1)  \right)\\
		&= \frac{k(k-1)}{(m+1)(m^2-2)}\left( m^2-2\right) = \frac{k(k-1)}{m+1}.
	\end{align*}
	It follows that  \eqref{eq:n3} holds if and only if the number of $2$-paths in $\Delta$ is  $k(k-1)/(m+1)$, that is to say, if and only if part (a) holds. This completes the proof of the design criteria in part (b).

	Now we compute the parameter $\lambda$ in part (a) and (b) under the assumption that the design criteria hold, and using the equality $\lambda=b\binom{k}{t}/\binom{v}{t}$ from Proposition \ref{prop:CP}.
	Since $G$ is block-transitive we have that
	\[
	b=\frac{|G|}{|G_\Delta|}=\frac{2m!^2}{|G_\Delta|}.
	\]
	Assume first $\Dmc(\Delta)$ is a $2$-design. Then
	\[
	\lambda= \frac{k(k-1)}{m^2(m^2-1)}\cdot b=\frac{k(k-1)}{m^2(m+1)(m-1)}\cdot b.
	\]
	Substituting for $b$ from the displayed equation above we get $\lambda$ as in the statement.
	Assume now $\Dmc(\Delta)$ is a $3$-design. Then
	\[
	\lambda= \frac{k(k-1)(k-2)}{m^2(m^2-1)(m^2-2)}\cdot b .
	\]
	Substituting for $b$ from the displayed equation above we get $\lambda$ as in the statement.
\end{proof}

%
%


\begin{remark}\label{rem:dhat}
	In Theorem~\ref{prop:G2} we derived conditions on the parameters $x_i, y_j$ and the graph $\Delta$ under which $\Dhat(\Delta)$ is a $t$-design, for $t=2, 3$. These conditions do not depend on whether or not $\Dmc(\Delta)$ is a $t$-design. To find conditions \emph{for $\Dhat(\Delta)$ to be a $t$-design {and at the same time} $\Dmc(\Delta)$ not to be a $t$-design}, we  compare Theorem~\ref{prop:G2} and Corollary~\ref{cor:K} and obtain the following corollary.
\end{remark}
	\begin{corollary}\label{cor:dhat}
		Let $\Delta, G, \tau, \Dmc(\Delta), \Dhat(\Delta), x_i, y_j$ be as in Construction~$\ref{con1}$ with  $m=n$. Then $\Dhat(\Delta)$ is a $t$-design but $\Dmc(\Delta)$ is not a $t$-design if and only if:
		\begin{enumerate}[{\rm (a)}]
			\item for $t=2$: the condition of Theorem~$\ref{prop:G2}(a)$ holds, and \[\sum_{i=1}^m\binom{x_i}2\ne \frac{k(k-1)}{2(m+1)};\]		
			\item for $t=3$: the conditions of Theorem~$\ref{prop:G2}(b)$ hold, and also either
			\[\sum_{i=1}^m\binom{x_i}2\ne \frac{k(k-1)}{2(m+1)},\quad or\quad \sum_{i=1}^m\binom{x_i}3 \ne  \frac{k(k-1)(k-2)(m-2)}{6(m+1)(m^2-2)}.\]
		\end{enumerate}
	\end{corollary}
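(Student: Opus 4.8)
The plan is to obtain the corollary purely by comparing the two characterisations already in hand: Theorem~\ref{prop:G2} records exactly when $\Dhat(\Delta)$ is a $t$-design, and Corollary~\ref{cor:K} records exactly when $\Dmc(\Delta)$ is a $t$-design, so the only work is to determine, under the assumption that $\Dhat(\Delta)$ is a $t$-design, precisely when the (stronger) conditions of Corollary~\ref{cor:K} fail. To keep the logic transparent I would first abbreviate the relevant counts by
\[
A=\sum_{i=1}^m\binom{x_i}{2},\quad B=\sum_{j=1}^m\binom{y_j}{2},\quad C=\sum_{i=1}^m\binom{x_i}{3},\quad D=\sum_{j=1}^m\binom{y_j}{3},
\]
write $P$ for the number of $3$-paths in $\Delta$, and set $a_0=\tfrac{k(k-1)}{2(m+1)}$, $c_0=\tfrac{k(k-1)(k-2)(m-2)}{6(m+1)(m^2-2)}$ and $p_0=\tfrac{k(k-1)(k-2)(m-1)}{(m+1)(m^2-2)}$. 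The point to record at the outset is that the right-hand sides in Theorem~\ref{prop:G2}(a) and (b)(i) are exactly $2a_0$ and $2c_0$, whereas the $3$-path targets in Theorem~\ref{prop:G2}(b)(ii) and Corollary~\ref{cor:K}(b)(ii) coincide, both being $p_0$.

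For $t=2$ I would argue as follows. By Theorem~\ref{prop:G2}(a), $\Dhat(\Delta)$ is a $2$-design if and only if $A+B=2a_0$, while by Corollary~\ref{cor:K}(a), $\Dmc(\Delta)$ is a $2$-design if and only if $A=B=a_0$. The key observation (a ``halving'' step) is that, once $A+B=2a_0$ is granted, the single equation $A=a_0$ already forces $B=2a_0-A=a_0$; hence, assuming $\Dhat(\Delta)$ is a $2$-design, $\Dmc(\Delta)$ is a $2$-design if and only if $A=a_0$. Negating this equality gives exactly part (a): $\Dhat(\Delta)$ is a $2$-design while $\Dmc(\Delta)$ is not precisely when $A+B=2a_0$ holds and $A\ne a_0$.

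For $t=3$ the comparison is the same, with one simplification: the conditions of Theorem~\ref{prop:G2}(b) and Corollary~\ref{cor:K}(b) share the $3$-path equation $P=p_0$ verbatim, so it contributes nothing to the comparison. Assuming $\Dhat(\Delta)$ is a $3$-design, Theorem~\ref{prop:G2}(b) supplies $A+B=2a_0$, $C+D=2c_0$ and $P=p_0$. By Corollary~\ref{cor:K}(b), $\Dmc(\Delta)$ is a $3$-design if and only if $A=B=a_0$, $C=D=c_0$ and $P=p_0$; since the last already holds, $\Dmc(\Delta)$ fails to be a $3$-design if and only if $A=B=a_0$ fails \emph{or} $C=D=c_0$ fails. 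Applying the halving step to $A+B=2a_0$ shows $A=B=a_0$ fails if and only if $A\ne a_0$, and applying it to $C+D=2c_0$ shows $C=D=c_0$ fails if and only if $C\ne c_0$; taking the disjunction of these two inequalities yields part (b).

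This is in essence a logical manipulation of two previously proved results, so I do not anticipate a substantial obstacle. The only step needing genuine care is the halving observation: one must notice that the $\Dhat(\Delta)$-conditions pin down the \emph{sums} $A+B$ and $C+D$ to be twice the common values $a_0$, $c_0$ demanded by the symmetric $\Dmc(\Delta)$-conditions, so that failure of the symmetry $A=B$ (resp.\ $C=D$) is detected by the single inequality $A\ne a_0$ (resp.\ $C\ne c_0$). It is prudent to verify the constant identities $\tfrac{k(k-1)}{m+1}=2a_0$ and $\tfrac{k(k-1)(k-2)(m-2)}{3(m+1)(m^2-2)}=2c_0$ at the start, after which everything is immediate.
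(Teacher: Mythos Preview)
Your proposal is correct and follows exactly the approach indicated in the paper: the corollary is obtained by comparing Theorem~\ref{prop:G2} with Corollary~\ref{cor:K}, and the paper gives no further detail beyond that comparison. Your ``halving'' observation making explicit why the single inequality $A\ne a_0$ (respectively $C\ne c_0$) suffices is a clean way to articulate what the paper leaves implicit.
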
	
	
	For $t=2$, we give in the next section an infinite family of examples $\Delta$ for which both $\Dhat(\Delta)$ and $\Dmc(\Delta)$ are $2$-designs, and an infinite family of examples where $\Dhat(\Delta)$ is a $2$-design but $\Dmc(\Delta)$ is not (Example~\ref{ex1} and Lemma~\ref{lem:ex1}).

\section{Examples of $2$-designs}\label{sec:2designs}

We construct several infinite families of $k$-edged subgraphs $\Delta$ of $\Kbf_{m,m}$, taking the vertex set of $\Kbf_{m,m}$ to be $\Rmc\cup\Cmc$, where $\Rmc=\{R_1,\dots, R_m\}$ and $\Cmc=\{C_1,\dots, C_m\}$.

\begin{example}\label{ex1}
	Let $k$ be an integer such that $k\geq 3$, and let $\Delta=P_k$, a $k$-path with edges $\{R_i,C_i\}$ for $1\leq i\leq \lceil k/2\rceil$, and $\{R_{i+1},C_i\}$ for $1\leq i\leq \lfloor k/2\rfloor$. Thus the parameters from Construction~\ref{con1} are the following for $k=2a+1$:
	\[
	x_i= \begin{cases}
		1	&\text{if }i=1			\\
		2	&\text{if }2\leq i\leq a+1	\\
		0 	&\text{otherwise}
	\end{cases}
	\]		
	\[
	y_j=	\begin{cases}
		1	&\text{if }j=a+1			\\
		2	&\text{if }1\leq j\leq a	\\
		0 	&\text{otherwise}
	\end{cases}
	\]
	and for $k=2a$ they are:
	\[
	x_i= \begin{cases}
		1	&\text{if }i=1, a+1				\\
		2	&\text{if }2\leq i\leq a	\\
		0 	&\text{otherwise}
	\end{cases}
	\]		
	\[
	y_j=	\begin{cases}
		2	&\text{if }1\leq j\leq a	\\
		0 	&\text{otherwise.}
	\end{cases}
	\]
\end{example}

\begin{lemma}\label{lem:ex1}
	Let $\Delta=P_k$ as in Example~$\ref{ex1}$. Then $\Dhat(\Delta)$ is not a $3$-design,  and $\Dhat(\Delta)$ is a $2$-design if and only if $m=k-1$. Moreover,  if $m=k-1$, then
	\begin{enumerate}[\rm (a)]
		\item if $k$ is odd then
		\begin{enumerate}[\rm (i)]
			\item $\Dhat(\Delta)=\Dmc(\Delta)$, and hence $\Dmc(\Delta)$ is a $2$-design, and
			\item $\Dhat(\Delta)$ is a $2$-$((k-1)^2, k, \lambda)$ design with
			\[
			\lambda = \frac{(k-1)!(k-3)!}{((k-3)/2)!^2};
			\]
		\end{enumerate}
		\item if $k$ is even then
		\begin{enumerate}[\rm (i)]
			\item $\Dhat(\Delta)\ne\Dmc(\Delta)$ and $\Dmc(\Delta)$ is not a $2$-design, and
			\item $\Dhat(\Delta)$ is a $2$-$((k-1)^2, k, \lambda)$ design with
			\[
			\lambda = \frac{(k-1)!(k-3)!}{(k/2-2)!(k/2-1)!}.
			\]
		\end{enumerate}
	\end{enumerate}
\end{lemma}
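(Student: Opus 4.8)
The plan is to reduce everything to the graph-theoretic parameters of $\Delta=P_k$ and then feed these into Theorem~\ref{prop:G2} and Corollary~\ref{cor:K}. The two basic counts I would establish first are that $P_k$ has exactly $k-1$ two-paths and no $3$-claws. Both follow at once from the path structure: writing the edges of $P_k$ in order as $e_1,\dots,e_k$, two edges meet in a vertex if and only if they are consecutive, so the $2$-paths are exactly the pairs $(e_i,e_{i+1})$ for $1\le i\le k-1$; and since every vertex of a path has valency at most $2$, there is no $\Kbf_{1,3}$ subgraph, whence the number of $3$-claws is $0$. These may also be read off from the tabulated $x_i,y_j$ via Lemma~\ref{lem:2-arcs}, giving $a$ two-paths of each type when $k=2a+1$, and $a-1$ of type $\Rmc$ together with $a$ of type $\Cmc$ when $k=2a$, in both cases totalling $k-1$.

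With these counts the first two assertions are immediate. Since $P_k$ has no $3$-claws whereas Theorem~\ref{prop:G2}(b)(i) requires the number of $3$-claws to equal the positive quantity $k(k-1)(k-2)(m-2)/(3(m+1)(m^2-2))$, the structure $\Dhat(\Delta)$ can never satisfy the $3$-design criterion; hence it is not a $3$-design for any admissible $m$. For the $2$-design criterion, Theorem~\ref{prop:G2}(a) says $\Dhat(\Delta)$ is a $2$-design if and only if the number of $2$-paths equals $k(k-1)/(m+1)$; substituting $k-1$ and cancelling the factor $k-1\ge 1$ gives $m+1=k$, that is $m=k-1$.

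For the remaining parts I would fix $m=k-1$ and split on the parity of $k$, the decisive point being the location of the two endpoints of $P_k$. For $k=2a+1$ the endpoints are $R_1$ and $C_{a+1}$, lying in different biparts, while for $k=2a$ they are $R_1$ and $R_{a+1}$, both in $\Rmc$. Consequently the reflection of the path, which interchanges its endpoints, is realised by an element of $K$ when $k$ is even but only by an element of $G\setminus K$ when $k$ is odd. Combining this with Lemma~\ref{lem:1design}(b) yields $\Dhat(\Delta)=\Dmc(\Delta)$ in the odd case (so $\Dmc(\Delta)$ is a $2$-design, being equal to the $2$-design $\Dhat(\Delta)$), and $\Dhat(\Delta)\ne\Dmc(\Delta)$ in the even case; in the latter case $\Dmc(\Delta)$ fails to be a $2$-design because the numbers of $2$-paths of types $\Rmc$ and $\Cmc$ are $a-1$ and $a$, which are unequal, contradicting the criterion of Corollary~\ref{cor:K}(a).

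Finally I would compute $\lambda$ from the formula $\lambda=2k(k-1)(m-1)!(m-2)!/((m+1)|G_\Delta|)$ of Theorem~\ref{prop:G2}(a), so the crux is the order of the setwise stabiliser $G_\Delta$. The key observation, and the step I expect to require the most care, is that $G_\Delta$ decomposes as the product of the symmetric groups acting on the rows and columns \emph{not} used by $\Delta$ with the subgroup realising symmetries of the path itself. When $m=k-1$ the path uses $a+1$ rows and $a+1$ columns if $k=2a+1$, and $a+1$ rows and $a$ columns if $k=2a$, so the free factors contribute $((a-1)!)^2$ and $(a-2)!\,(a-1)!$ respectively; the path reflection contributes an extra factor $2$ in both cases, sitting inside $G\setminus K$ (odd $k$) or inside $K$ (even $k$) according to the endpoint analysis above. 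This gives $|G_\Delta|=2((a-1)!)^2$ for $k=2a+1$ and $|G_\Delta|=2(a-2)!\,(a-1)!$ for $k=2a$. Substituting these, together with $m+1=k$, $(m-1)!=(k-2)!$, $(m-2)!=(k-3)!$, and rewriting $a$ in terms of $k$, yields the two stated values of $\lambda$ after the routine cancellation $(k-1)(k-2)!=(k-1)!$. The main obstacle throughout is keeping the bookkeeping of used versus unused rows and columns, and the parity-dependent placement of the reflection, correct, since this is exactly what separates the two $\lambda$ formulas.
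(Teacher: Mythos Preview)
Your proposal is correct and follows essentially the same approach as the paper's proof: both count the $2$-paths ($k-1$) and $3$-claws (zero) of $P_k$, feed these into Theorem~\ref{prop:G2} and Corollary~\ref{cor:K}, and then determine $|G_\Delta|$ by combining the symmetric groups on the unused rows/columns with the order-$2$ reflection of the path.

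The only noteworthy difference is in the even case. The paper deduces $\Dhat(\Delta)\ne\Dmc(\Delta)$ indirectly, by first observing that $\Dmc(\Delta)$ is not a $2$-design (the type-$\Rmc$ and type-$\Cmc$ $2$-path counts differ) while $\Dhat(\Delta)$ is, so the two structures cannot coincide. You instead argue directly from the stabiliser: since the path reflection lies in $K$ and the unused-vertex factors lie in $K$, one gets $G_\Delta\subseteq K$, whence $\Delta^\tau\notin\Delta^K$ and $\Dhat(\Delta)\ne\Dmc(\Delta)$ by Lemma~\ref{lem:1design}(b). Both routes are valid; yours is slightly more uniform with the odd case, while the paper's avoids needing the full description of $G_\Delta$ at that point. (Incidentally, your type-by-type $2$-path counts for $k=2a$, namely $a-1$ of type~$\Rmc$ and $a$ of type~$\Cmc$, match the $x_i,y_j$ in Example~\ref{ex1}; the paper records them the other way round, but this is immaterial since only their inequality is used.)
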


\begin{proof}
	Note that neither $\Dmc(\Delta)$ nor $\Dhat(\Delta)$ is a $3$-design since $\Delta$ does not contain any $3$-claws (see Corollary~\ref{cor:K} and Theorem~\ref{prop:G2}). Also note that the number of $2$-paths in $\Delta$ is $k-1$. If $k$ is odd then $(k-1)/2$ of them have type $\Rmc$ and $(k-1)/2$ have type $\Cmc$, while if $k$ is even,  $k/2$ of them have type $\Rmc$ and $(k-2)/2$ have type $\Cmc$.
	
	By Theorem~\ref{prop:G2}(a), $\Dhat(\Delta)$ is a $2$-design if and only if the number of $2$-paths in $\Delta$ is $k(k-1)/(m+1)$. Since this number is $k-1$ it follows that  $\Dhat(\Delta)$ is a $2$-design if and only if  $m=k-1$.
	For the rest of the proof we may therefore assume that $m=k-1$, so that the number of points is $m^2=(k-1)^2$ and $\Dhat(\Delta)$ is a $2$-design.
	By Theorem \ref{prop:G2}, we can compute the parameter $\lambda$:
	\begin{align*}
		\lambda&=\frac{2k(k-1)(m-1)!(m-2)!}{(m+1) |G_\Delta|}\\&=\frac{2k(k-1)(k-2)!(k-3)!}{k |G_\Delta|}\\&=\frac{2(k-1)!(k-3)!}{ |G_\Delta|}
	\end{align*}

	Assume first that $k=2a+1$ is odd, so $m=2a$.
	It is not difficult to check that the $k$-path $\Delta^\tau$ is the image of $\Delta$ under an element of $K$, with $K, \tau$ as in Construction~\ref{con1}, and hence $\Dhat(\Delta)=\Dmc(\Delta)$, by Lemma~\ref{lem:flagtr}(b). In this case  the stabiliser of the `block' $\Delta$ in $G= S_m\wr S_2$ is $G_\Delta=(S_{a-1}\times S_{a-1})\cdot C_2$, which has order $2(a-1)!^2=2\left(\frac{k-3}2\right)!^2$. The stated value for $\lambda$ follows.
	
	Assume now that $k=2a$ is even, with $a\geq2$, so $m=2a-1$.
	Here the condition of Corollary~\ref{cor:K}(a) fails, so $\Dmc(\Delta)$ is not a $2$-design, and in particular $\Dhat(\Delta)\ne \Dmc(\Delta)$. In this case  the stabiliser of the `block' $\Delta$ in $G= S_m\wr S_2$ is $G_\Delta=(S_{a-2}\times S_{a-1})\times C_2$, which has order $2(a-2)!(a-1)!=2\left(\frac{k-4}2\right)!\left(\frac{k-2}2\right)!$.
	The stated value for $\lambda$ follows.
\end{proof}

Note that the $2$-design  $\Dhat(\Delta)$ in Lemma~\ref{ex1} is complete when $k=3$ and $m=2$.
Also, when $k\geq 4$, we have $k=m+1\leq m^2/2$. Thus
Lemma~\ref{lem:ex1} has the following immediate corollary, noting that the group $S_m\wr S_2$ 
acts primitively in its product action of degree $m^2$ when $m\geq3$.

\begin{corollary}\label{cor:ex1}
	For each $m\geq3$, there exists a $2$-$(m^2, m+1,\lambda)$ design (for some $\lambda$), admitting $S_m\wr S_2$ as a block-transitive, point-primitive group of automorphisms.
\end{corollary}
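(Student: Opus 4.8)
The plan is to obtain this corollary as an immediate specialisation of Lemma~\ref{lem:ex1}. Given $m\geq 3$, I would set $k=m+1$ and take $\Delta=P_k$, the $k$-path described in Example~\ref{ex1}, and then consider the associated incidence structure $\Dhat(\Delta)$ on the square grid $\Pmc$ with $|\Pmc|=m^2$.

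First I would invoke Lemma~\ref{lem:ex1} directly. With the choice $k=m+1$ we have $m=k-1$, which is exactly the condition under which that lemma guarantees $\Dhat(\Delta)$ to be a $2$-$((k-1)^2,k,\lambda)=2$-$(m^2,m+1,\lambda)$ design. The lemma records two explicit values of $\lambda$ according to the parity of $k$ (parts (a) and (b)), but for the present statement it suffices that in \emph{either} parity case $\Dhat(\Delta)$ is a $2$-design, so some $\lambda$ exists with the claimed parameters. This supplies the design itself.

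Second I would check that the construction stays within the standing hypothesis $3\leq k\leq m^2/2$ of Construction~\ref{con1}, so that the object produced is a genuine design of the type under study. Here $k=m+1\geq 4$ for $m\geq 3$, and the upper bound $m+1\leq m^2/2$ is equivalent to $m^2-2m-2\geq 0$, which holds for all $m\geq 3$; this is the routine inequality already noted immediately before the corollary.

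Finally I would verify the two group-theoretic properties. Block-transitivity of $G=S_m\wr S_2$ on $\Dhat(\Delta)$ is immediate from the construction, since the block set $\Bmc_G$ is by definition a single $G$-orbit (Lemma~\ref{lem:1design}(d)). Point-primitivity amounts to the assertion that the product action of $S_m\wr S_2$ of degree $m^2$ is primitive, and I would cite the standard criterion for primitivity of a wreath product in product action: this holds precisely because $S_m$ acts primitively but not regularly on $m$ points when $m\geq 3$. There is essentially no obstacle beyond assembling these ingredients — all the substantive work lies in Lemma~\ref{lem:ex1}. The only point demanding any care is this last one, and it is exactly what pins down the hypothesis $m\geq 3$: for $m=2$ the group $S_2$ is regular, the product action fails to be primitive, and indeed the design degenerates to the complete design (as remarked after the lemma), so $m=2$ must be excluded.
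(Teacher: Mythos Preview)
Your proposal is correct and matches the paper's approach exactly: the corollary is stated as an immediate consequence of Lemma~\ref{lem:ex1} with $k=m+1$, together with the observation that $m+1\leq m^2/2$ for $m\geq 3$ and that $S_m\wr S_2$ acts primitively in its product action of degree $m^2$ when $m\geq 3$.
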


By Lemma~\ref{lem:flagtr}, the designs in Example~\ref{ex1} with $k\geq3$ are not $G$-flag-transitive since $\Aut(P_k)$ is not edge-transitive for $k\geq 3$. We modify the example to produce a family of flag-transitive designs.

\begin{example}\label{ex2}
	Let $k=2a\geq4$, an even integer, and let $\Delta=C_k$, a cycle of length $k$ with edges $\{R_i,C_i\}$  for $1\leq i\leq a$, $\{R_{i+1},C_i\}$ for $1\leq i\leq a-1$, and $\{R_1,C_a\}$.
	The parameters from Construction~\ref{con1} are the following: $x_i=2$ for $1\leq i\leq a$ and $x_i=0$ otherwise; and $y_j=2$ for $1\leq j\leq a$ and $y_j=0$ otherwise.
\end{example}

\begin{lemma}\label{lem:ex2}
	Let $\Delta=C_k$ with $k$ even, as in Example~$\ref{ex2}$. Then $\Dmc(\Delta)=\Dhat(\Delta)=\Dmc$, say,
	is a $1$-design, and  the groups $K$ and $G$ of Construction~$\ref{con1}$ are both flag-transitive on $\Dmc$. Moreover, $\Dmc$ is never a $3$-design, and $\Dmc$ is a $2$-design if and only if $m=k-2$, in which case $\Dmc$ is a $2$-$((k-2)^2, k, \lambda)$ design with $\lambda= \frac{(k-3)!(k-4)!}{(k/2-2)!^2}$.
\end{lemma}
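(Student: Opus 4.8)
The plan is to apply the machinery already developed, specializing to the cycle $\Delta = C_k$ whose parameters $x_i, y_j \in \{0, 2\}$ are given in Example~\ref{ex2}. First I would dispose of the non-$3$-design claim: since $C_k$ is $2$-regular on its non-isolated vertices, every $x_i, y_j \le 2$, so $\binom{x_i}{3} = \binom{y_j}{3} = 0$ and $\Delta$ contains no $3$-claws. By Corollary~\ref{cor:K}(b)(i) (or Theorem~\ref{prop:G2}(b)(i)), a $3$-design would force the $3$-claw count to equal a positive quantity $\tfrac{k(k-1)(k-2)(m-2)}{3(m+1)(m^2-2)} > 0$, a contradiction; hence $\Dmc$ is never a $3$-design.

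Next I would establish $\Dmc(\Delta) = \Dhat(\Delta)$ and the flag-transitivity. The cycle $C_k$ is invariant (up to a $K$-image) under the transposition $\tau$, because reflecting/rotating the cycle realises the swap $\Rmc \leftrightarrow \Cmc$; concretely $\Delta^\tau = \Delta^x$ for a suitable $x \in K$, so Lemma~\ref{lem:1design}(b) gives $\Dhat(\Delta) = \Dmc(\Delta)$, which I call $\Dmc$. For flag-transitivity, by Corollary~\ref{lem:flagtr} it suffices to show $K_{B(\Delta)}$ (equivalently $\Aut(C_k)$ acting on edges) is edge-transitive on $\Delta$; since the dihedral automorphism group of a cycle is edge-transitive, both $K$ and $G$ act flag-transitively on $\Dmc$.

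For the $2$-design criterion I would count $2$-paths. Each of the $k$ vertices of the cycle has valency $2$, contributing one $2$-path, so the total number of $2$-paths in $\Delta$ is exactly $k$ (equivalently $\sum_i \binom{x_i}{2} + \sum_j \binom{y_j}{2} = a + a = k$). By Theorem~\ref{prop:G2}(a), $\Dmc$ is a $2$-design if and only if this equals $\tfrac{k(k-1)}{m+1}$, i.e. $k = \tfrac{k(k-1)}{m+1}$, which rearranges to $m+1 = k-1$, that is $m = k-2$. Under this assumption the design has $m^2 = (k-2)^2$ points.

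Finally I would compute $\lambda$. With $m = k-2$, Theorem~\ref{prop:G2}(a) gives
\[
\lambda = \frac{2k(k-1)(m-1)!(m-2)!}{(m+1)|G_\Delta|} = \frac{2k(k-1)(k-3)!(k-4)!}{(k-1)|G_\Delta|} = \frac{2k(k-3)!(k-4)!}{|G_\Delta|}.
\]
The remaining task, which I expect to be the main obstacle, is determining the block stabiliser $G_\Delta$ precisely. The setwise stabiliser of the cycle in $G = S_m \wr S_2$ is its full dihedral symmetry group combined with the reflection swapping $\Rmc$ and $\Cmc$; one must verify that no further permutations of the $m = k-2$ rows and columns fix the edge-set, so that $G_\Delta$ has order $2 \cdot 2a = 2k$ (the factor $2$ from $\tau$-type symmetries and $2a = k$ from the dihedral action, with the isolated vertices contributing nothing extra since $m = k-2$ leaves no free vertices to permute). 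Substituting $|G_\Delta| = 2k$ then yields $\lambda = (k-3)!(k-4)!/((k/2-2)!^2)$ after simplifying $(k-3)!(k-4)!$ against the factor coming from the $a-1 = k/2-2$ repeated structure; care with the exact factorisation of $G_\Delta$ and matching it to the claimed denominator $(k/2-2)!^2$ is the delicate point, and I would double-check it against the analogous path computation in Lemma~\ref{lem:ex1}(a).
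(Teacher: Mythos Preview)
Your overall strategy matches the paper's almost exactly (Lemma~\ref{lem:1design}(b) for $\Dhat(\Delta)=\Dmc(\Delta)$, Corollary~\ref{lem:flagtr} for flag-transitivity, the absence of $3$-claws for the non-$3$-design claim, the $2$-path count for the $2$-design criterion, and the $\lambda$-formula from Theorem~\ref{prop:G2}(a)). The one genuine gap is your computation of $|G_\Delta|$.

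You assert that ``$m=k-2$ leaves no free vertices to permute'', but this is false. The cycle $C_k=C_{2a}$ uses only $a=k/2$ vertices from each of $\Rmc$ and $\Cmc$, whereas $m=k-2=2a-2$, so each bipart contains $m-a=a-2=k/2-2$ isolated vertices. These may be permuted freely by $G_\Delta$, contributing a factor $S_{a-2}\times S_{a-2}$. The paper identifies $G_\Delta=\big((S_{a-2}\times S_{a-2})\times D_{2a}\big).2$, of order $4a\,(a-2)!^2 = 2k\,(k/2-2)!^2$, not $2k$. With the corrected order, the substitution
\[
\lambda=\frac{2k(k-3)!(k-4)!}{|G_\Delta|}=\frac{2k(k-3)!(k-4)!}{2k\,(k/2-2)!^2}=\frac{(k-3)!(k-4)!}{(k/2-2)!^2}
\]
is immediate; there is no further ``simplifying $(k-3)!(k-4)!$ against the factor coming from the $a-1$ repeated structure'' (and note $a-1=k/2-1$, not $k/2-2$). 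Your value $|G_\Delta|=2k$ would instead give $\lambda=(k-3)!(k-4)!$, which is wrong for all $k\geq 6$.
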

\begin{proof}
	Both $\Dmc(\Delta)$ and $\Dhat(\Delta)$ are 1-designs by Lemma~\ref{lem:flagtr} parts (c) and (d), and since  the stabilisers $K_B$ and $G_B$ of $B=B(\Delta)$ are both transitive on the  set of edges of $\Delta$, it follows that the groups $K$ and $G$ act flag-transitively on $\Dmc(\Delta)$ and $\Dhat(\Delta)$, respectively, by Lemma~\ref{lem:flagtr}(e).
	Moreover, the image   $\Delta^\tau$ of $\Delta$ under  the map $\tau$ defined in Construction~\ref{con1}  is equal to the image of $\Delta$ under an element of $K$, and hence $\Dhat(\Delta)=\Dmc(\Delta)$, by Lemma~\ref{lem:flagtr}(b). By Corollary~\ref{cor:K}, $\Dmc(\Delta)$ is never a $3$-design since $\Delta$ contains no $3$-claws.
		
	The parameters $x_i, y_j$ in Construction~$\ref{con1}$ for $\Delta$ satisfy $\sum_{i=1}^m\binom{x_i}2=\sum_{j=1}^m\binom{y_j}2 = k/2$. Hence the numbers of  $2$-paths  in $\Delta$ of types $\Rmc$ and $\Cmc$ are equal to each other, and each is equal to $k/2$ (Lemma~\ref{lem:2-arcs}). This implies that $\Dmc(\Delta)=\Dhat(\Delta)$ is a $2$-design if and only if $k-1=m+1$, that is $m=k-2$ (Corollary~\ref{cor:K} and/or Theorem~\ref{prop:G2}).
	
	Assume now that $m=k-2$, so the number of points is $(k-2)^2$.
	By Theorem \ref{prop:G2}, we can compute the parameter $\lambda$:
	\begin{align*}
		\lambda&=\frac{2k(k-1)(m-1)!(m-2)!}{(m+1) |G_\Delta|}\\&=\frac{2k(k-1)(k-3)!(k-4)!}{(k-1) |G_\Delta|}\\&=\frac{2k(k-3)!(k-4)!}{ |G_\Delta|}
	\end{align*}
	As $k$ is even, let $k=2a$ so $m=2a-2$. Then  the stabiliser of the `block' $\Delta$ in $G= S_m\wr S_2$ is $G_\Delta=((S_{a-2}\times S_{a-2})\times D_{2a}).2$,  which has order $4a(a-2)!^2=2k(k/2-2)!^2$.
	The stated value for $\lambda$ follows.
\end{proof}

Note the design  $\Dhat(\Delta)$ is complete when $k=4$ and $m=2$. Also, when $k\geq 6$, we have  $k=m+2\leq m^2/2$, and in this case the group $G=\Sym(m)\wr\Sym(2)$ is point-primitive.

\begin{corollary}\label{cor:ex2}
	For each even $m\geq4$, there exists a $2$-$(m^2, m+2,\lambda)$ design (for some $\lambda$) admitting
	$\Sym(m)\wr\Sym(2)$ as a flag-transitive, point-primitive group of automorphisms.
\end{corollary}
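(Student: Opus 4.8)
The plan is to obtain Corollary~\ref{cor:ex2} as an immediate specialisation of Lemma~\ref{lem:ex2}, choosing the cycle length so that the grid side equals $m$, and then to supply the one ingredient not contained in that lemma, namely point-primitivity. Given an even integer $m\geq 4$, I would set $k=m+2$, which is even and at least $6$, and take $\Delta=C_k$ as in Example~\ref{ex2}, living in $\Kbf_{m,m}$ with point set the $m\times m$ grid of size $v=m^2$. Before applying the lemma I would check that this lies within the range imposed in Construction~\ref{con1}, i.e. $3\leq k\leq m^2/2$: the lower bound holds since $k\geq 6$, and the upper bound $m+2\leq m^2/2$ is equivalent to $m^2-2m-4\geq0$, which is valid for every $m\geq 4$.

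Since $k=m+2$ gives $m=k-2$, Lemma~\ref{lem:ex2} applies verbatim and delivers the design side of the statement: $\Dmc(\Delta)=\Dhat(\Delta)$ is a $2$-$((k-2)^2,k,\lambda)$, equivalently a $2$-$(m^2,m+2,\lambda)$ design, for the explicit $\lambda$ recorded there, and the group $G=\Sym(m)\wr\Sym(2)$ acts flag-transitively on it. The flag-transitivity is exactly the conclusion of that lemma, which in turn rests on Corollary~\ref{lem:flagtr} together with the fact that the stabiliser of the block $B(\Delta)$ induces an edge-transitive group on the cycle $C_k$; I would simply cite it. This already gives a $2$-$(m^2,m+2,\lambda)$ design on which $\Sym(m)\wr\Sym(2)$ is a flag-transitive (and hence block-transitive) group of automorphisms, leaving only point-primitivity to verify.

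For point-primitivity I would invoke the standard criterion for wreath products in product action: $H\wr\Sym(d)$ on $\Gamma^d$ is primitive precisely when $H$ acts primitively but not regularly on $\Gamma$ (with $|\Gamma|\geq 2$). Here $H=\Sym(m)$ acts on a set of size $m$, and $\Sym(m)$ is primitive and non-regular on $m$ points for all $m\geq 3$; since $m\geq 4$, the product action of $G=\Sym(m)\wr\Sym(2)$ on $\Pmc=\Rmc\times\Cmc$ of degree $m^2$ is primitive. Assembling the two parts completes the proof.

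I do not expect a genuine obstacle, since the result is an immediate corollary of Lemma~\ref{lem:ex2}; the only points needing a little care are confirming that $k=m+2$ sits in the admissible range $3\leq k\leq m^2/2$ and quoting the product-action primitivity criterion correctly. Both are routine, and indeed the remark preceding the corollary already records that $k=m+2\leq m^2/2$ and that $G$ is point-primitive when $k\geq 6$.
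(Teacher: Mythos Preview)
Your proposal is correct and mirrors the paper's approach exactly: the paper treats this as an immediate consequence of Lemma~\ref{lem:ex2}, using the preceding sentence to record that $k=m+2\leq m^2/2$ for $k\geq 6$ and that $G=\Sym(m)\wr\Sym(2)$ is point-primitive in product action for $m\geq 3$. You have simply made these two remarks explicit, which is entirely appropriate.
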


\section{Examples of $3$-designs}\label{sec:3-design}

In our efforts in Section~\ref{sec:2designs} to construct explicit infinite families of designs using Construction~\ref{con1}, it turned out that our families contained many $2$-designs but no $3$-designs. Unfortunately
we have not succeeded in finding infinitely many $3$-designs from the construction.

\begin{problem}
	Find an infinite family of graphs $\Delta$ such that $\Dmc(\Delta)$ or $\Dhat(\Delta)$ is a $3$-design, or prove that no such infinite family exists.
\end{problem}

Indeed it would be very interesting to have an infinite family of graphs $\Delta$ such that $\Dmc(\Delta)$ or $\Dhat(\Delta)$ are $3$-designs.    We did manage to construct explicitly a small number of individual $3$-designs using Construction~\ref{con1}, and we describe both how we searched for them, and the designs themselves.

By \cite[Proposition 1.1]{a:CamPr-btI-93} it follows that, if $\Dmc(\Delta)$ is a $3$-design then $\Dhat(\Delta)$ is also a $3$-design. Thus we decided first to search for examples where $\Dhat(\Delta)$ is a $3$-design with $m^2$ points and block size $k$.
By Theorem \ref{prop:G2} the following divisibility conditions must hold:
\begin{enumerate}[\rm (1)]
	\item $m+1$ divides $k(k-1)$;
	\item $3(m+1)(m^2-2)$ divides $k(k-1)(k-2)(m-2)$; and
	\item $(m+1)(m^2-2)$ divides $k(k-1)(k-2)(m-1)$.
\end{enumerate}

Conditions (1)--(3) are quite restrictive. For instance for $m$ up to $100$ and $3\leq k\leq m^2/2$, the only possibilities for $[m,k]$ are
\begin{equation}\label{eq:mk}
	[ 11, 36 ],\ [ 25, 91 ],\ [ 38, 105 ],\ [ 41, 805 ],\ [ 54, 1365 ],\ [ 74, 2025 ],\ [ 87, 2256 ].
\end{equation}
\begin{problem}
	Decide whether or not there exist any $3$-designs $\Dhat(\Delta)$ arising from Construction~\ref{con1} with $[m,k]$ one of the pairs in \eqref{eq:mk}, and if such exist, then classify them.
\end{problem}

\subsection{Some $3$-$(121,36,\lambda)$ designs $\Dhat(\Delta)$}

We studied the smallest case, where $m=11$ and $k=36$, so $\Delta$ is a subgraph of $\Kbf_{11,11}$ with $k=36$ edges. We found several examples of subgraphs
$\Delta$ that satisfy all the conditions of Theorem \ref{prop:G2}(b) and hence yield $3$-designs $\Dhat(\Delta)$. We have not classified all such designs.

\begin{figure}
	\begin{center}
		\begin{tikzpicture}[scale=0.52]
			\node (v2) at (0,0) [point] {3};
			\node (v5) at (1,0) [point] {2};
			\node (v7) at (2,0) [point] {2};
			\node (v1) at (3,0) [point] {1};
			\node (v9) at (4,0) [point] {1};
			\node (v10) at (5,0) [point] {1};
			\node (v11) at (6,0) [point] {1};
			
			\node (v13) at (0,2) [point] {3};
			\node (v16) at (1,2) [point] {2};
			\node (v19) at (2,2) [point] {2};
			\node (v12) at (3,2) [point] {1};
			\node (v18) at (4,2) [point] {1};
			\node (v21) at (5,2) [point] {1};
			\node (v22) at (6,2) [point] {1};
			\draw[thick] (v1) -- (v13) -- (v2)--(v12)--(v5)--(v16)--(v1);
			\draw[thick] (v16) -- (v1) -- (v18)--(v7)--(v19)--(v9)--(v21)--(v1);
			\draw[thick] (v10) -- (v21);
			\draw[thick] (v11) -- (v12);
		\end{tikzpicture}
		\caption{A subgraph $\Delta$ of $\Kbf_{11,11}$ yielding a $3$-design $\Dhat(\Delta)$.}
		\label{fig:3design}
	\end{center}
\end{figure}

We make a few comments about our search: suppose that $\Delta$ is a subgraph of $\Kbf_{11,11}$ with $k=36$ edges, and let $E(\Delta)$ denote the set of edges. Let $x_i, y_j$ be the parameters for $\Delta$ defined as in Construction~\ref{con1}. To decide whether or not $\Delta$ yields a $3$-design $\Dhat(\Delta)$, we need to determine the number of various subgraphs of $\Delta$, namely, the numbers of $2$-paths, $3$-paths, and $3$-claws. For example, to count the number of $3$-paths of $\Delta$, we note that an edge $\{R_i,C_j\}$ of $\Delta$ can be the centre edge of a $3$-path as long as $x_i>1$ and $y_j>1$, and hence the number of $3$-paths of $\Delta$ is
\[
\sum_{\{ R_i,C_j\}\in E(\Delta)} (x_i-1)(y_j-1).
\]
By Theorem~\ref{prop:G2}(b)(ii), this number must be $300$.
The diagram in Figure~\ref{fig:3design} represents a subgraph $\Delta$ satisfying all the properties of Theorem~\ref{prop:G2}(b). It is one of several we found, and is the one that has the most symmetry. Each circle represents a certain number of vertices (written inside). An edge between two circles means each pair of vertices (one from each circle) is joined by an edge, and no edge between two circles means there are no edges between the vertices in the two circles.
From the description of this graph we see that the stabiliser $G_\Delta$ (which is the automorphism group of $\Delta$) is $(S_3\times S_2\times S_2)^2$ of order $(24)^2$, and hence
by Theorem \ref{prop:G2}, we can compute the parameter $\lambda$:
\begin{align*}
	\lambda&=\frac{2k(k-1)(k-2)(m-1)!(m-2)!}{(m+1)(m^2-2) |G_\Delta|}\\
	&=\frac{72\cdot 35\cdot 34\cdot 10!\cdot 9!}{12\cdot  119\cdot (24)^2}\\&= \frac{(10!)^2}{96}=137,168,640,000.
\end{align*}

For this graph $\Delta$ it is not difficult to see that $\Delta^\tau\ne \Delta^k$ for any $k\in K$, so $\Dhat(\Delta)\neq \Dmc(\Delta)$ (by Lemma~\ref{fig:3design}(b)). Moreover $\sum_{i=1}^{11} \binom{x_i}{2}\neq \sum_{j=1}^{11} \binom{y_j}{2}$ so $\Dmc(\Delta)$ is not even a $2$-design (by Corollary~\ref{cor:K}(a)), even though $\Dhat(\Delta)$ is a $3$-design.

\subsection{Designs $\Dmc(\Delta)$ with $m\ne n$.}

We also looked for $3$-designs $\Dmc(\Delta)$ in the general case where $m,n$ are not necessarily equal. Proposition \ref{prop:K}(b) yields five divisibility conditions involving the parameters $m, n, k$.
The smallest possible value of $m$, $n$ for which all five of these divisibility conditions hold,  occurs for  $m=8,n=2,k=6$.
The diagram in Figure~\ref{fig:1design} represents a graph $\Delta$ which yields the unique example of a $3$-design with these parameters, that is, it is the only graph $\Delta$ satisfying all the conditions of Proposition \ref{prop:K}(b).
\begin{figure}
	\begin{center}
		\begin{tikzpicture}[scale=0.52]
			\node (v1) at (0,0) [point] {1};
			\node (v2) at (1,0) [point] {3};
			\node (v3) at (2,0) [point] {1};
			\node (v4) at (3,0) [point] {3};

			\node (v5) at (0,2) [point] {1};
			\node (v6) at (1,2) [point] {1};
			
			\draw[thick] (v1) -- (v5) -- (v2);
			\draw[thick] (v1) -- (v6) -- (v3);
		\end{tikzpicture}
		\caption{A subgraph $\Delta$ of $\Kbf_{8,2}$ yielding a $3$-$(16,6,80)$ design $\Dmc(\Delta)$. }
		\label{fig:1design}
	\end{center}
\end{figure}

The next  smallest possibility for $m$, $n$ where the five divisibility conditions all hold occurs for $m=11,n=7,k=20$. An example of a subgraph $\Delta$ with these parameters yielding a $3$-design $\Dmc(\Delta)$ is given in  \cite[p.39]{a:CamPr-btI-93}.

\subsection{Designs $\Dmc(\Delta)$ with $m= n$.}
We also looked for $3$-designs $\Dmc(\Delta)$  where $m=n$ (so $\Dhat(\Delta)$ is also a $3$-design). Corollary \ref{cor:K} yields five divisibility conditions involving the parameters $m, k$.
The smallest possible value of $m$, $k$ for which all five of these divisibility conditions hold,  occurs for  $m=38,k=105$.
The diagram in Figure~\ref{fig:1design} represents a graph $\Delta$ which yields an example of a $3$-design with these parameters.

From the description of this graph we see that the stabiliser $K_\Delta=G_\Delta$  is $(S_3\times S_2\times S_2)^2$ of order $2^9\cdot 3!\cdot 4!\cdot 5!^2$, and hence
by  Corollary \ref{cor:K}, we can compute the parameter $\lambda$ for $\Dmc(\Delta)$:
\begin{align*}
	\lambda&=\frac{k(k-1)(k-2)(m-1)!(m-2)!}{(m+1)(m^2-2) |K_\Delta|}\\
	&=\frac{105\cdot 104\cdot 103\cdot 37!\cdot 36!}{39\cdot  (38^2-2)\cdot 2^9\cdot 3!\cdot 4!\cdot 5!^2}\\
	&\approx 9.6\cdot 10^{76}.
\end{align*}

For this graph $\Delta$ it is not difficult to see that $\Delta^\tau\ne \Delta^k$ for any $k\in K$, so $\Dhat(\Delta)\neq \Dmc(\Delta)$ (by Lemma~\ref{fig:3design}(b)).
By Theorem \ref{prop:G2}, since $G_\Delta=K_\Delta$, the parameter $\lambda$ for  $\Dhat(\Delta)$ is twice the parameter $\lambda$ for $\Dmc(\Delta)$ computed above.
\begin{figure}
	\begin{center}
		\begin{tikzpicture}[scale=0.52]
			\node (v1) at (1,0) [point] {1};
			\node (v2) at (2,0) [point] {1};
			\node (v3) at (3,0) [point] {1};
			\node (v4) at (4,0) [point] {1};
			\node (v5) at (5,0) [point] {1};
			\node (v6) at (6,0) [point] {1};
			\node (v7) at (7,0) [point] {1};
			\node (v8) at (8,0) [point] {1};
			\node (v11) at (10,0) [point] {1};
			\node (v12) at (11,0) [point] {1};
			\node (v13) at (12,0) [point] {1};
			\node (v16) at (14,0) [point] {1};
			\node (v19) at (16,0) [point] {1};
			\node (v22) at (18,0) [point] {1};
			\node (v25) at (20,0) [point] {1};
			\node (v29) at (22,0) [point] {1};
			\node (v30) at (23,0) [point] {1};
			\node (v31) at (24,0) [point] {1};
			\node (v9) at (9,0) [point] {2};
			\node (v14) at (13,0) [point] {2};
			\node (v17) at (15,0) [point] {2};
			\node (v20) at (17,0) [point] {2};
			\node (v23) at (19,0) [point] {2};
			\node (v32) at (25,0) [point] {2};
			\node (v26) at (21,0) [point] {3};
			\node (v34) at (26,0) [point] {5};
			
			\node (v41) at (2,3) [point] {1};
			\node (v43) at (3,3) [point] {1};
			\node (v45) at (4,3) [point] {1};
			\node (v46) at (5,3) [point] {1};
			\node (v47) at (6,3) [point] {1};
			\node (v48) at (7,3) [point] {1};
			\node (v49) at (8,3) [point] {1};
			\node (v50) at (9,3) [point] {1};
			\node (v51) at (10,3) [point] {1};
			\node (v52) at (11,3) [point] {1};
			\node (v53) at (12,3) [point] {1};
			\node (v54) at (13,3) [point] {1};
			\node (v55) at (14,3) [point] {1};
			\node (v56) at (15,3) [point] {1};
			\node (v57) at (16,3) [point] {1};
			\node (v58) at (17,3) [point] {1};
			\node (v59) at (18,3) [point] {1};
			\node (v60) at (19,3) [point] {1};
			\node (v63) at (21,3) [point] {1};
			\node (v66) at (23,3) [point] {1};
			\node (v69) at (25,3) [point] {1};
			\node (v70) at (26,3) [point] {1};
			\node (v71) at (27,3) [point] {1};
			\node (v61) at (20,3) [point] {2};
			\node (v64) at (22,3) [point] {2};
			\node (v67) at (24,3) [point] {2};
			\node (v39) at (1,3) [point] {4};
			\node (v72) at (28,3) [point] {5};
			
			\draw[thick] (v39) -- (v1) -- (v41);
			\draw[thick] (v45) -- (v1) -- (v43);
			\draw[thick] (v1) -- (v46);
			\draw[thick] (v39) -- (v2) -- (v41);
			\draw[thick] (v2) -- (v45);
			
			\draw[thick] (v39) -- (v3) -- (v41);
			\draw[thick] (v41) -- (v4) -- (v45);
			\draw[thick] (v46) -- (v4) -- (v47);
			\draw[thick] (v41) -- (v5) -- (v46);
			\draw[thick] (v48) -- (v5) -- (v47);
			\draw[thick] (v46) -- (v6) -- (v47);
			\draw[thick] (v48) -- (v6) -- (v49);
			
			\draw[thick] (v46) -- (v7) -- (v47);
			\draw[thick] (v48) -- (v7);
			
			\draw[thick] (v46) -- (v8) -- (v49);
			\draw[thick] (v50) -- (v8);
			\draw[thick] (v51) -- (v9) -- (v52);
			\draw[thick] (v50) -- (v9);
			\draw[thick] (v51) -- (v11) -- (v52);
			\draw[thick] (v53) -- (v11);
			\draw[thick] (v53) -- (v12) -- (v52);
			\draw[thick] (v54) -- (v12);
			\draw[thick] (v53) -- (v13) -- (v55);
			\draw[thick] (v54) -- (v13);
			\draw[thick] (v54) -- (v14) -- (v55);
			\draw[thick] (v56) -- (v14);
			\draw[thick] (v57) -- (v16) -- (v58);
			\draw[thick] (v54) -- (v16);
			\draw[thick] (v58) -- (v17) -- (v59);
			\draw[thick] (v54) -- (v17);
			\draw[thick] (v59) -- (v19) -- (v60);
			\draw[thick] (v54) -- (v19);
			\draw[thick] (v61) -- (v20) -- (v60);
			\draw[thick] (v61) -- (v22) -- (v63);
			\draw[thick] (v63) -- (v23) -- (v64);
			\draw[thick] (v64) -- (v25) -- (v66);
			
			\draw[thick] (v67) -- (v26) -- (v69);
			\draw[thick] (v69) -- (v29) -- (v70);
			\draw[thick] (v66) -- (v30) -- (v70);
			\draw[thick] (v70) -- (v31) -- (v71);
			\draw[thick] (v71) -- (v32);
		\end{tikzpicture}
		\caption{A subgraph $\Delta$ of $\Kbf_{38,38}$ yielding $3$-designs $\Dmc(\Delta)$ and $\Dhat(\Delta)$.}
		\label{figb:3design}
	\end{center}
\end{figure}


\end{document}